\newtheorem{theorem}{Theorem}
\newtheorem{proposition}{Proposition}
\newtheorem{lemma}{Lemma}
\theoremstyle{definition}\newtheorem{definition}{Definition}
\theoremstyle{definition}\newtheorem{remark}{Remark}
\theoremstyle{definition}\newtheorem{example}{Example}
\def\di{\displaystyle}
\def\N{\mathbb{N}}
\def\R{\mathbb{R}}
\def\T{\mathbb{T}}
\def\I{\mathbb{I}}
\def\AC{\mathrm{AC}}
\def\L{\mathrm{L}}
\def\DD{\Delta}
\def\CC{\mathscr{C}}
\def\RR{\mathscr{R}}
\def\GG{\mathscr{G}}
\def\KK{\mathcal{K}}
\def\ab{a \trianglelefteq t_0 \trianglelefteq b}
\DeclareMathOperator*{\supess}{sup\,ess}
\newcommand{\fonction}[5]{\begin{array}[t]{lrcl}#1 :&#2 &\rightarrow &#3\\&#4& \mapsto &#5 \end{array}}
\newcommand{\fonctionsansdef}[3]{#1 : #2 \rightarrow #3}
\title{General Cauchy-Lipschitz theory for shifted and non shifted $\DD$-Cauchy problems on time scales}
\author{Lo\"ic Bourdin\footnote{Laboratoire de Math\'ematiques et de leurs Applications - Pau (LMAP). UMR CNRS 5142. Universit\'e de Pau et des Pays de l'Adour. \texttt{bourdin.l@univ-pau.fr}} ,
Emmanuel Tr\'elat\footnote{Universit\'e Pierre et Marie Curie (Univ. Paris 6) and Institut Universitaire de France, CNRS UMR 7598, Laboratoire Jacques-Louis Lions, F-75005, Paris, France. \texttt{emmanuel.trelat@upmc.fr}}
}
\date{}
\begin{document}

\maketitle

\begin{abstract}
This article is devoted to completing some aspects of the classical Cauchy-Lipschitz (or Picard-Lindel\"of) theory for general nonlinear systems posed on time scales, that are closed subsets of the set of real numbers. Partial results do exist but do not cover the framework of general dynamics on time-scales encountered e.g. in applications to control theory.
In the present work, we first introduce the notion of absolutely continuous solution for shifted and non shifted $\DD$-Cauchy problems, and then the notion of a maximal solution. We state and prove a Cauchy-Lipschitz theorem, providing existence and uniqueness of the maximal solution of a given $\DD$-Cauchy problem under suitable assumptions like regressivity and local Lipschitz continuity, and discuss some related issues like the behavior of maximal solutions at terminal points.
\end{abstract}

\noindent\textbf{Keywords:} Time scale; Cauchy-Lipschitz (Picard-Lindel\"of) theory; existence; uniqueness; shifted problems.

\noindent\textbf{AMS Classification:} 34N99; 34G20; 39A13; 39A12.

\tableofcontents

\section{Introduction}\label{section0}
The \textit{time scale} theory was introduced by S. Hilger in his PhD thesis \cite{hilg} in 1988 in order to unify discrete and continuous analysis, with the general idea of extending classical theories on an arbitrary non empty \textit{closed} subset $\T$ of $\R$. Such a closed subset $\T$ is called a \textit{time scale}. The objective is to establish the validity of some results both in the continuous case $\T =\R$ and in the purely discrete case $\T =\N$, but also to treat more general models of processes involving both continuous and discrete time elements. We refer the reader e.g. to \cite{gama,may} where the authors study a seasonally breeding population whose generations do not overlap or to \cite{ati} for applications to economy. By considering $\T = \{ 0 \} \cup \lambda^\N$ with $0 < \lambda < 1$, time scale concept also allows to cover quantum calculus \cite{kac}. Since S. Hilger defined the $\DD$-derivative and the $\DD$-integral on a time scale, many authors have 
 extended to time scales various results from the continuous or discrete standard calculus theory. We refer the reader to the surveys \cite{agar2,agar3,bohn,bohn3}. However, due to the recency of the field, the basic nonlinear theory is yet to be developed and refined.

Some Cauchy-Lipschitz (Picard-Lindel\"of) type results on time scales are provided in \cite{bohn,cich2,hilg2,kaym,kubi,laks} where the authors prove the existence and uniqueness of solutions for $\DD$-Cauchy problems of the form:
\begin{equation}\label{introcp}
q^\DD = f(q,t), \quad q(t_0) = q_0,
\end{equation}
where $t_0 \in \T$. Note that papers are devoted to $\DD$-Cauchy problems with parameter in \cite{hils4} and with time delays in \cite{karp}. Many authors are also interested in shifted $\DD$-Cauchy problems
\begin{equation}\label{introcps}
q^\DD = f(q^\sigma,t),\quad q(t_0) = q_0,
\end{equation}
where $q^\sigma = q \circ \sigma$ (see further for the precise definitions of these notions). Such shifted problems are often used as models in the existing literature (see e.g. \cite{torr8,hils2,torr7}, \cite[Remark 3.9]{hils3} and \cite[Remark 3.6]{hils4}), because they emerge in adjoint equations accordingly to the \textit{shifted} Leibniz formula \cite{bohn}
\begin{equation}
(q_1 q_2)^\DD = q_1^\DD q_2^\sigma + q_1 q_2^\DD = q_1^\DD q_2 + q_1^\sigma q_2^\DD.
\end{equation}
Nevertheless, to the best of our knowledge, there does not exist a general Cauchy-Lipschitz theory on time scales that is fully complete in order to be applied to problems arising for example in control theory\footnote{Actually, the present article was motivated by the needs of completing the existing results on Cauchy-Lipschitz theory on time scales, in order to investigate general non linear control systems on time scales, and more precisely to derive a general version of the Pontryagin Maximum Principle in optimal control.}.

Let us recall briefly the bibliographical context on the Cauchy-Lipschitz theory on time scales. The first result on $\DD$-Cauchy problems is due to S. Hilger in \cite[Paragraph 5]{hilg2}, who derived the existence and uniqueness of $\CC^1_\mathrm{rd}$-solutions for continuous dynamics. This framework is not suitable for general control problems where controls are measurable functions that have discontinuities in general. Note that similar frameworks and results are provided in \cite[Paragraph 8.2]{bohn}, in \cite{kaym,laks,tisd} and references therein. In \cite{cich2,kubi}, the authors respectively treat weak continuous and Carath\'eodory dynamics living in a general Banach space. Note that they only treat the non shifted case where $q_0$ is an initial condition, that is, solutions are only defined for $t \geq t_0$. In view of considering adjoint equations, it is of interest to study backward $\DD$-Cauchy problems where $q_0$ is a final condition, for which solutions are def
 ined for $t \leq t_0$. As is very well known in time scale calculus, the solvability of such backward non shifted $\DD$-Cauchy problems requires a \textit{regressivity} assumption on the dynamics (see e.g. \cite{bohn,hilg2} and \cite[Remark 3.8]{hils3}). This important issue is not addressed in these two articles. Another issue which is not addressed is the fact that the usual Cauchy-Lipschitz theory treats Cauchy problems constraining the solutions to take values in an open subset $\Omega$ of $\R^n$ (see e.g. \cite{codd,smal}). Finally, up to our knowledge, the notion of extension of a solution on time scales, and the behavior of the maximal solution at terminal points, have not been studied. Similarly, we are not aware of articles treating both shifted and non shifted general nonlinear $\DD$-Cauchy problems.

This article is thus devoted to fill an existing gap of the literature, and to provide a general Cauchy-Lipschitz theory on time scales generalizing the basic notions and results of the classical continuous theory surveyed e.g. in \cite{codd,smal}. Precisely, we first introduce the notion of an \textit{absolutely continuous solution}. Then we define the concept of \textit{extension} of a solution, and of \textit{maximal} and \textit{global} solutions in the time scale context. We establish a general version of the Cauchy-Lipschitz theorem (existence and uniqueness of the maximal solution, also referred to as Picard-Lindel\"of theorem) for dynamics posed on a time scale, under regressivity and local Lipschitz continuity assumptions, for shifted and non shifted general nonlinear $\DD$-Cauchy problems in the following framework:
\begin{itemize}
\item $f$ is a general $\DD$-Carath\'eodory function, where $\DD$-measure $\mu_\DD$ on a time scale $\T$ is defined in terms of Carath\'eodory extension in \cite[Chapter 5]{bohn3};
\item $q_0$ is not necessarily an initial or a final condition;
\item the solutions take their values in an open subset $\Omega$ of $\R^n$.
\end{itemize}
We also investigate the globality feature of the maximal solution. Our results are established first for general non shifted $\DD$-Cauchy problems \eqref{introcp} and then for shifted ones \eqref{introcps}.

Our study uses the work of A. Cabada and D. Vivero in \cite{caba2}, who proved a criterion for absolutely continuous functions written as the $\DD$-integral of their $\DD$-derivatives. Their result allows us to give a $\DD$-integral characterization of the solutions of $\DD$-Cauchy problems which is instrumental in our proofs.

Notice that analogous results on $\nabla$-Cauchy problems ($\rho$-shifted or not) can be  derived in a similar way.

The article is structured as follows. Section~\ref{section1} is devoted to recall basic notions of time scale calculus. In Section~\ref{section2}, we define the notions of a solution, of an extension of a solution, of a maximal and a global solution for general non shifted $\DD$-Cauchy problems. Under suitable assumptions on the dynamics, we establish a Cauchy-Lipschitz theorem and then investigate the behavior of the maximal solution at its terminal points. Section~\ref{section3} is devoted to establish similar results for \textit{shifted} $\DD$-Cauchy problems.

\section{Preliminaries on time scale calculus}\label{section1}
In this section, we recall basic results in time scale calculus. The first part concerns the structure of time scales and the notion of $\DD$-differentiability (see \cite{bohn}). The second part concerns the $\DD$-Lebesgue measure defined in terms of Carath\'eodory extension (see \cite{bohn3,guse}) and surveys results on $\DD$-integrability proved in \cite{caba}. The last part gathers properties of absolutely continuous functions borrowed from \cite{caba2}.

Let $n\in\N^*$. Throughout, the notation $\Vert \cdot \Vert$ stands for the Euclidean norm of $\R^n$. For every $x \in \R^n$ and every $R \geq 0$, the notation $\overline{B}(x,R)$ stands for the closed ball of $\R^n$ centered at $x$ and with radius $R$.

\subsection{Time scale and $\DD$-differentiability}\label{section11}
Let $\T$ be a time scale, that is, a closed subset of $\R$. We assume that $\mathrm{card}(\T) \geq 2$. For every $A \subset \R$, we denote $A_\T = A \cap \T$. An interval of $\T$ is defined by $I_\T$ where $I$ is an interval of $\R$. 

The backward and forward jump operators $\rho,\sigma:\T\rightarrow\T$ are respectively defined by
\begin{equation*}
\begin{split}
\rho (t) &= \sup \{ s \in \T\ \vert\ s < t \},\\
\sigma (t) &= \inf \{ s \in \T\ \vert\ s > t \},
\end{split}
\end{equation*}
for every $t \in \T$,
where $\rho (\min \T) = \min \T$ (resp. $\sigma(\max \T) = \max \T$) whenever $\T$ admits a minimum (resp. a maximum). 

A point $t \in \T$ is said to be a left-dense (respectively, left-scattered, right-dense or right-scattered) point of $\T$ if $\rho (t) = t$ (respectively, $\rho (t) < t$, $\sigma (t) = t$ or $\sigma (t) > t$). The graininess function $\mu:\T\rightarrow\R^+$ is defined by $\mu(t) = \sigma (t) -t$ for every $t \in \T$. 

We set $\T^\kappa = \T \backslash \{ \max \T \}$ whenever $\T$ admits a left-scattered maximum, and $\T^\kappa = \T$ otherwise. A function $q:\T\rightarrow\R^n$ is said to be $\DD$-differentiable at $t \in \T^\kappa$ if the limit
$$q^\DD (t) = \lim\limits_{\substack{s \to t \\ s \in \T}} \dfrac{q^\sigma (t) -q(s)}{\sigma (t) -s} $$
exists in $\R^n$, where $q^\sigma = q \circ \sigma$. We recall the following well known results (see \cite{bohn}):
\begin{itemize}
\item if $t \in \T^\kappa$ is a right-dense point of $\T$, then $q$ is $\DD$-differentiable at $t$ if and only if the limit
$$q^\DD (t) = \lim\limits_{\substack{s \to t \\ s \in \T}} \dfrac{q(t)-q(s)}{t-s} $$
exists in $\R^n$;
\item if $t \in \T^\kappa$ is a right-scattered point of $\T$ and if $q$ is continuous at $t$, then $q$ is $\DD$-differentiable at $t$, and
$$q^\DD (t) = \dfrac{q^\sigma(t) - q(t)}{\mu(t)}.$$
\end{itemize}

\subsection{Lebesgue $\DD$-measure and Lebesgue $\DD$-integrability}\label{section12}
Recall that the set of right-scattered points $\RR \subset \T$ is at most countable (see \cite[Lemma 3.1]{caba}).

Let $\mu_\DD$ be the Lebesgue $\DD$-measure on $\T$ defined in terms of Carath\'eodory extension in \cite[Chapter 5]{bohn3}. We also refer the reader to \cite{agar,caba,guse} for more details on the $\mu_\DD$-measure theory. In particular, for all elements $a,b$ of $\T$ such that $a \leq b $, one has $\mu_\DD ([a,b[_\T) = b-a$. Recall that $A \subset \T$ is a $\mu_\DD$-measurable set of $\T$ if and only if $A$ is an usual $\mu_L$-measurable set of $\R$, where $\mu_L$ denotes the usual Lebesgue measure (see \cite[Proposition 3.1]{caba}). Moreover, if $A \subset \T \backslash \{ \sup \T \}$, then 
$$\mu_\DD ( A ) = \mu_L (A) + \di \sum_{r \in A \cap \RR} \mu (r).$$
Let $A \subset \T$. A property is said to hold $\DD$-almost everywhere (shortly $\DD$-a.e.) on $A$ if it holds for every $t \in A \backslash A_0$, where $A_0 \subset A$ is some $\mu_\DD$-measurable subset of $\T$ satisfying $\mu_\DD (A_0) = 0$. In particular, since $\mu_\DD (\{ r \}) = \mu (r) > 0$ for every $r \in \RR$, we conclude that if a property holds $\DD$-a.e. on $A$, then it holds for every $r \in A \cap \RR$. 

Let $A \subset \T \backslash \{ \sup \T \}$ be a $\mu_\DD$-measurable set of $\T$. Consider a function $q$ defined $\DD$-a.e. on $A$ with values in $\R^n$. Let $\tilde{A}=A \cup ]r,\sigma(r)[_{r \in A \cap \RR}$, and let $\tilde{q}$ be the extension of $q$ defined $\mu_L$-a.e. on $\tilde{A}$ by
\begin{equation*}
\tilde{q} (t) = \left\lbrace \begin{array}{rcl} 
q(t) & \textrm{if} & t \in A \\ 
q(r) & \textrm{if} & t \in ]r,\sigma(r)[ \ \textrm{for every} \; r \in A \cap \RR. 
\end{array} \right.
\end{equation*}
We recall that $q$ is $\mu_\DD$-measurable on $A$ if and only if $\tilde{q}$ is $\mu_L$-measurable on $\tilde{A}$ (see \cite[Proposition 4.1]{caba}). 

The functional space $\L^\infty_\T (A,\R^n)$ is the set of all functions $q$ defined $\DD$-a.e. on $A$, with values in $\R^n$, that are $\mu_\DD$-measurable on $A$ and such that
$$
\supess\limits_{\tau \in A} \Vert q(\tau) \Vert < + \infty.
$$
Endowed with the norm $\Vert q \Vert_{\L^\infty_\T (A)} = \supess\limits_{\tau \in A} \Vert q(\tau) \Vert$, it is a Banach space (see \cite[Theorem 2.5]{agar}).
The functional space $\L^1_\T (A,\R^n)$ is the set of all functions $q$ defined $\DD$-a.e. on $A$, with values in $\R^n$, that are $\mu_\DD$-measurable on $A$ and such that
$$
\int_{A} \Vert q(\tau) \Vert \; \DD \tau < + \infty.
$$
Endowed with the norm $\Vert q \Vert_{\L^1_\T (A,\R^n)} = \int_{A} \Vert q(\tau) \Vert \; \DD \tau$, it is a Banach space (see \cite[Theorem 2.5]{agar}).
Recall that if $q \in \L^1_\T (A,\R^n)$ then
$$ \int_{A} q(\tau) \; \DD \tau = \int_{\tilde{A}} \tilde{q}(\tau) \; d\tau = \di \int_{A} q(\tau) \; d\tau +  \sum_{r \in A \cap \RR} \mu (r) q(r)$$
(see \cite[Theorems 5.1 and 5.2]{caba}). Note that if $A$ is bounded then $\L^\infty_\T (A,\R^n) \subset \L^1_\T (A,\R^n)$.

\subsection{Properties of absolutely continuous functions}\label{section13}
Let $a$ and $b$ be two elements of $\T$ such that $a < b$. Let $\CC ([a,b]_\T,\R^n)$ denote the  space of continuous functions defined on $[a,b]_\T$ with values in $\R^n$. Endowed with its usual norm $\Vert \cdot \Vert_\infty$, it is a Banach space. Let $\AC ([a,b]_\T,\R^n)$ denote the subspace of absolutely continuous functions. We recall the two following results.

\begin{proposition}\label{prop13-1}
Let $t_0 \in [a,b]_\T$ and $q:[a,b]_\T\rightarrow\R^n$. Then $q \in \AC([a,b]_\T,\R^n)$ if and only if the two following conditions are satisfied:
\begin{enumerate}
\item $q$ is $\DD$-differentiable $\DD$-a.e. on $[a,b[_\T$ and $q^\DD \in \L^1_\T ([a,b[_\T,\R^n)$;
\item For every $t \in [a,b]_\T$, there holds
$$ q(t) = q(t_0) +  \int_{[t_0,t[_\T} q^\DD (\tau) \; \DD \tau$$
whenever $t \geq t_0$, and
$$ q(t) = q(t_0) - \int_{[t,t_0[_\T} q^\DD (\tau) \; \DD \tau $$
whenever $ t \leq t_0$.
\end{enumerate}
\end{proposition}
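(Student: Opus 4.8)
The plan is to reduce the statement to the classical Lebesgue differentiation theory on the real interval $[a,b]$, by transporting $q$ to a function on $[a,b]$ via a piecewise affine extension across the gaps associated to right-scattered points, and then transporting the classical fundamental theorem of calculus back to $\T$ through the integration bridge recalled in Section~\ref{section12}.

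I would first dispose of the implication $(1)\wedge(2)\Rightarrow q\in\AC([a,b]_\T,\R^n)$. Assume (1) and (2). Since $q^\DD\in\L^1_\T([a,b[_\T,\R^n)$, the Lebesgue $\DD$-integral is absolutely continuous with respect to $\mu_\DD$: for every $\eps>0$ there exists $\delta>0$ such that $\int_E \Vert q^\DD\Vert\,\DD\tau<\eps$ whenever $E\subset[a,b[_\T$ is $\mu_\DD$-measurable with $\mu_\DD(E)<\delta$. Given a finite family of disjoint subintervals $[a_k,b_k[_\T$ of $[a,b]_\T$ with $\sum_k(b_k-a_k)<\delta$, one has $\mu_\DD\big(\bigcup_k[a_k,b_k[_\T\big)=\sum_k(b_k-a_k)<\delta$, and (2) gives $\sum_k\Vert q(b_k)-q(a_k)\Vert=\sum_k\big\Vert\int_{[a_k,b_k[_\T}q^\DD\,\DD\tau\big\Vert\le\int_{\bigcup_k[a_k,b_k[_\T}\Vert q^\DD\Vert\,\DD\tau<\eps$. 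Hence $q\in\AC([a,b]_\T,\R^n)$.

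For the converse, let $\hat q:[a,b]\to\R^n$ be the extension equal to $q$ on $[a,b]_\T$ and affine on each gap $]r,\sigma(r)[$ (with $r\in\RR$), interpolating between $q(r)$ and $q(\sigma(r))$; its slope there is $(q(\sigma(r))-q(r))/\mu(r)$. I would check that $q\in\AC([a,b]_\T,\R^n)$ if and only if $\hat q$ is classically absolutely continuous on $[a,b]$: the gaps are pairwise disjoint subintervals of $[a,b]$, so $\sum_{r\in\RR}\mu(r)\le b-a$, and this summability lets absolute continuity pass faithfully between the two settings. The classical theorem then yields that $\hat q$ is differentiable $\mu_L$-a.e. with $\hat q'\in\L^1([a,b],\R^n)$ and $\hat q(t)=\hat q(t_0)+\int_{t_0}^t\hat q'(s)\,ds$ for all $t$. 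Next I would identify $\hat q'$ with $q^\DD$: at a right-dense point $t$ where $\hat q'(t)$ exists, the classical difference quotients restricted to $s\in\T$ agree with those of $q$ (as $\hat q=q$ on $\T$), so $\hat q'(t)=q^\DD(t)$ by the characterization of Section~\ref{section11}; at each right-scattered $r$, $q$ is continuous (being the restriction of the continuous $\hat q$), hence $\DD$-differentiable with $q^\DD(r)=(q(\sigma(r))-q(r))/\mu(r)$, which is exactly the constant value of $\hat q'$ on $]r,\sigma(r)[$. Thus $\hat q'=\widetilde{q^\DD}$ $\mu_L$-a.e. on $\widetilde{[a,b[_\T}$, and the measurability/integrability correspondence of Section~\ref{section12} gives $q^\DD\in\L^1_\T([a,b[_\T,\R^n)$, establishing (1).

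It remains to derive (2). For $t\ge t_0$ the integration bridge of Section~\ref{section12}, together with the fact that $\widetilde{[t_0,t[_\T}$ and $[t_0,t]$ differ by a $\mu_L$-null set, gives $\int_{[t_0,t[_\T}q^\DD(\tau)\,\DD\tau=\int_{\widetilde{[t_0,t[_\T}}\widetilde{q^\DD}(s)\,ds=\int_{t_0}^t\hat q'(s)\,ds=\hat q(t)-\hat q(t_0)=q(t)-q(t_0)$, which is the desired formula; the case $t\le t_0$ is obtained identically after reversing the orientation. I expect the main obstacle to be the careful verification that absolute continuity transfers exactly between $q$ on $[a,b]_\T$ and its affine extension $\hat q$ on $[a,b]$, together with the clean handling of the a.e. identification $\hat q'=\widetilde{q^\DD}$: one must simultaneously recover $q^\DD$ pointwise at every one of the (at most countably many) right-scattered points --- where the $\DD$-derivative is required to hold, not merely $\DD$-a.e. --- while discarding only the $\mu_L$-negligible set of right-dense points at which $\hat q'$ fails to exist.
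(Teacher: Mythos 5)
The paper itself gives no self-contained argument for this proposition (it is derived in one line from \cite[Theorem 4.1]{caba2}), so your reconstruction of the underlying result via the piecewise affine extension is the natural route, and most of it is correct: the implication $(1)\wedge(2)\Rightarrow q\in\AC([a,b]_\T,\R^n)$ via absolute continuity of the $\DD$-integral with respect to $\mu_\DD$ and the identity $\mu_\DD([a_k,b_k[_\T)=b_k-a_k$ is fine; so is the identification of $\hat q'$ with $q^\DD$ (restriction of full difference quotients at right-dense points, the elementary right-scattered formula at every $r\in\RR$, and the fact that a $\mu_L$-null set containing no right-scattered points is also $\mu_\DD$-null); and so is the final transfer of the classical fundamental theorem of calculus through the step extension $\widetilde{q^\DD}$.

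The genuine gap is the step you yourself flag as ``the main obstacle'': the implication $q\in\AC([a,b]_\T,\R^n)\Rightarrow\hat q\in\AC([a,b],\R^n)$, which is the crux of the whole converse. The justification you offer --- the gaps are disjoint, hence $\sum_{r\in\RR}\mu(r)\le b-a$ --- concerns the gap \emph{lengths}, whereas the difficulty sits in the gap \emph{slopes}. If $c<d$ both lie in a gap $]r,\sigma(r)[$, then $\Vert\hat q(d)-\hat q(c)\Vert=\frac{d-c}{\mu(r)}\Vert q(\sigma(r))-q(r)\Vert$, and nothing in the definition of $\AC([a,b]_\T,\R^n)$ prevents the slopes $\Vert q(\sigma(r))-q(r)\Vert/\mu(r)$ from being unbounded over the countably many gaps: take for instance $\T=\{0\}\cup\{1/n\ \vert\ n\in\N^*\}$ with gaps of length $\sim n^{-2}$ carrying jumps $\sim n^{-3/2}$, which is realized by an absolutely continuous $q$ (the $\DD$-integral of the $\L^1_\T$ function equal to $\sim n^{1/2}$ on the atoms). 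Hence smallness of the total length of a disjoint family of real intervals gives, by itself, no control on the sum of the partial-gap increments. What is missing is an actual idea: first show that time-scale absolute continuity implies bounded variation on $[a,b]_\T$, so that $\sum_{r\in\RR\cap[a,b[_\T}\Vert q(\sigma(r))-q(r)\Vert<\infty$; then, given $\eps>0$, isolate a finite set $F$ of gaps outside of which the jumps sum to less than $\eps/3$, choose $\delta'$ smaller than both the time-scale $\delta$ and $\eps/(3S)$ with $S$ the maximal slope over $F$, bound the contribution of partial gaps with $r\in F$ by $S\delta'$, and bound the contribution of each gap with $r\notin F$ by its full jump (the portions inside one gap being disjoint). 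With this two-regime argument, combined with the decomposition of an arbitrary real interval into its time-scale core and two partial gaps, the transfer holds and your proof closes; without it, the argument is incomplete at exactly the point where all the time-scale-specific difficulty is concentrated.
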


This result can be easily derived from \cite[Theorem 4.1]{caba2}. 
By combining Proposition~\ref{prop13-1} and the usual Lebesgue's point theory in $\R$, we infer the following result (see also \cite{zhan3} for a similar result).

\begin{proposition}\label{prop13-2}
Let $t_0 \in [a,b]_\T$ and $q \in \L^1_\T ([a,b[_\T,\R^n)$. Let $Q$ be the function defined on $[a,b]_\T$ by
$$Q(t) = \int_{[t_0,t[_\T} q (\tau) \; \DD \tau$$
if $ t \geq t_0$, and by
$$ Q(t) = - \int_{[t,t_0[_\T} q (\tau) \; \DD \tau $$
if $t \leq t_0$.
Then $Q \in \AC([a,b]_\T)$ and $Q^\DD = q$ $\DD$-a.e. on $[a,b[_\T$.
\end{proposition}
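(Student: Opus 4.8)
The plan is to reduce the statement to the classical Lebesgue differentiation theorem on $\R$ through the extension procedure recalled in Section~\ref{section12}. Set $A = [a,b[_\T$ and let $\tilde{q}$ be the associated $\mu_L$-integrable extension on $\tilde{A} = A \cup ]r,\sigma(r)[_{r \in A \cap \RR}$. First I would observe that filling in every scattered gap turns $\tilde{A}$ into the genuine real interval $[a,b[$, so that $\tilde{q} \in \L^1(\tilde{A},\R^n)$ is an ordinary Lebesgue-integrable function on $[a,b[$. I then introduce its classical primitive $\tilde{Q}(s) = \int_{t_0}^{s} \tilde{q}(\tau)\, d\tau$ for $s \in [a,b]$, the integral being the usual Lebesgue integral of $\R$ oriented as in the statement. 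The key bridge is the identity $\int_{[t_0,t[_\T} q(\tau)\, \DD\tau = \int_{[t_0,t[} \tilde{q}(\tau)\, d\tau$ recalled in Section~\ref{section12}, together with the fact that the extension of $[t_0,t[_\T$ fills up to the real interval $[t_0,t[$; these give $Q(t) = \tilde{Q}(t)$ for every $t \in [a,b]_\T$, i.e. $Q$ is simply the restriction of $\tilde{Q}$ to $\T$. Since $\tilde{Q} \in \AC([a,b],\R^n)$ by the classical theory, $Q$ is in particular continuous on $[a,b]_\T$.

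Next I would compute $Q^\DD$ $\DD$-a.e.\ on $[a,b[_\T$ by distinguishing the two types of points. If $r \in A \cap \RR$ is right-scattered, continuity of $Q$ yields $Q^\DD(r) = (Q(\sigma(r)) - Q(r))/\mu(r)$; since $[r,\sigma(r)[_\T = \{r\}$ and $\mu_\DD(\{r\}) = \mu(r)$, the numerator equals $\int_{\{r\}} q\, \DD\tau = \mu(r)\, q(r)$, hence $Q^\DD(r) = q(r)$ exactly. If $t$ is a right-dense point of $[a,b[_\T$ that is moreover a Lebesgue point of $\tilde{q}$, then $\tilde{Q}$ is differentiable at $t$ with $\tilde{Q}'(t) = \tilde{q}(t) = q(t)$ (recall that $\tilde{q}$ coincides with $q$ on $\T$); restricting the two-sided real difference quotient of $\tilde{Q} = Q$ to $s \in \T$ preserves the limit, so $Q^\DD(t) = q(t)$. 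It then remains to check that the excluded set $N$ of right-dense points that fail to be Lebesgue points of $\tilde{q}$ is $\mu_\DD$-negligible: $N$ is contained in the $\mu_L$-null set of non-Lebesgue points of $\tilde{q}$ and contains no right-scattered point, so $\mu_\DD(N) = \mu_L(N) + \sum_{r \in N \cap \RR} \mu(r) = 0$. Combining the two cases gives $Q^\DD = q$ $\DD$-a.e.\ on $[a,b[_\T$.

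Finally I would invoke Proposition~\ref{prop13-1} to conclude that $Q \in \AC([a,b]_\T,\R^n)$. Condition~(1) holds because $Q$ is $\DD$-differentiable $\DD$-a.e.\ with $Q^\DD = q \in \L^1_\T([a,b[_\T,\R^n)$ by hypothesis, and condition~(2) is exactly the defining formula for $Q$, since $Q(t_0) = 0$ and $\int_{[t_0,t[_\T} Q^\DD\, \DD\tau = \int_{[t_0,t[_\T} q\, \DD\tau = Q(t)$ for $t \geq t_0$ (and symmetrically for $t \leq t_0$). I expect the only genuinely delicate step to be the right-dense case: one must transfer the full two-sided differentiability of the real primitive $\tilde{Q}$ at Lebesgue points into the restricted time-scale difference quotient, and verify that the Lebesgue-exceptional set, although $\mu_L$-null in $\R$, remains $\mu_\DD$-null in $\T$ — which is precisely where the absence of right-scattered points in $N$ is used.
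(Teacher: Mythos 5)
Your proof is correct and takes essentially the same route as the paper: the paper obtains this proposition precisely by combining Proposition~\ref{prop13-1} with classical Lebesgue point theory on $\R$ applied to the extension $\tilde{q}$ on $\tilde{A}=[a,b[$, which is exactly what you carry out in detail. Your case distinction — right-scattered points handled exactly via $\mu_\DD(\{r\})=\mu(r)$, right-dense points via Lebesgue points of $\tilde{q}$, with the exceptional set being $\mu_\DD$-null because it contains no right-scattered points — is the intended content of the paper's one-line argument.
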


\section{General non shifted $\DD$-Cauchy problem}\label{section2}
Throughout this section we consider the general non shifted $\DD$-Cauchy problem
\begin{equation*}
\mathrm{(\DD\text{-}CP)}  \begin{split}
q^\DD(t) &= f (q(t),t), \\
q(t_0)&=q_0,
\end{split} 
\end{equation*}
where $t_0 \in \T$, $q_0 \in \Omega$, where $\Omega$ is a non empty open subset of $\R^n$, and $f:\Omega \times \T \setminus \{ \sup \T \}\rightarrow\R^n$ is a $\DD$-Carath\'eodory function. The notation $\KK$ stands for the set of compact subsets of $\Omega$.

\subsection{Preliminaries}\label{section20}
In what follows it will be important to distinguish between three cases:
\begin{enumerate}
\item $t_0 = \min \T$;
\item $t_0 = \max \T$;
\item $t_0 \neq \inf \T$ and $t_0 \neq \sup \T$.
\end{enumerate}
Indeed, the interval of definition of a solution of $\mathrm{(\DD\text{-}CP)}$ will depend on the specific case under consideration. If $t_0 = \min \T$, then a solution can only \textit{go forward} since $]-\infty,t_0[_\T = \emptyset$. If $t_0 = \max \T$, then a solution can only \textit{go backward}. If $t_0 \neq \inf \T$ and $t_0 \neq \sup \T$, then a solution can go \textit{backward} and \textit{forward}. 

\begin{definition}
For all $(a,b) \in \T^2$, we say that $\ab$ if
\begin{itemize}
\item $a = t_0 < b$ in the case $t_0 = \min \T$;
\item $a < t_0 = b$ in the case $t_0 = \max \T$;
\item $a < t_0 < b$ in the case $t_0 \neq \inf \T$ and $t_0 \neq \sup \T$.
\end{itemize}
\end{definition}

If $\ab$ then $[a,b]_\T$ is a potential interval of definition for a solution of $\mathrm{(\DD\text{-}CP)}$. 
Due to this difference of intervals, it is required to make different assumptions on $f$ accordingly, whence the following series of definitions.

\begin{definition}
The function $f$ is said to be \emph{locally bounded on $\Omega \times \T \backslash \{ \sup \T \}$} if, for every $K \in \KK$, for all $(a, b) \in \T^2$ such that $ a < b$, there exists $M \geq 0$ such that
\begin{equation}\label{eqcondinfini}\tag{H${}_\infty$}
\Vert f(x,t) \Vert \leq M,
\end{equation}
for every $x\in K$ and for $\DD$-a.e. $t \in [a,b[_\T$.
\\
In what follows this property will be referred to as \eqref{eqcondinfini}.
\end{definition}

\begin{definition}
The function $f$ is said to be \emph{locally Lipschitz continuous with respect to the first variable at right-dense points} if, for every $\overline{x} \in \Omega$ and every right-dense point $\overline{t}\in\T \backslash \{ \sup \T \}$, there exist $R > 0$, $\delta > 0$ and $L \geq 0$ such that $\overline{B}(\overline{x},R) \subset \Omega$ and $\overline{t}+\delta \in \T$, and such that
\begin{equation}\label{eqcondrdloc-Lip}\tag{H${}^{\mathrm{rd}}_{\mathrm{loc-Lip}}$}
\Vert f(x_1,t) - f(x_2,t) \Vert \leq L \Vert x_1 - x_2 \Vert,
\end{equation}
for all $ x_1, x_2 \in \overline{B}(\overline{x},R)$ and for $\DD$-a.e. $t \in [\overline{t},\overline{t}+\delta[_\T$.
\\
In what follows this property will be referred to as \eqref{eqcondrdloc-Lip}.
\end{definition}

\begin{definition}
The function $f$ is said to be \emph{forward $\Omega$-stable at right-scattered points} if the mapping
\begin{equation}\label{eqcondfsta}\tag{H${}^{\mathrm{forw}}_{\mathrm{stab}}$}
\fonction{G^+(t)}{\Omega}{\R^n}{x}{x+\mu (t) f(x,t)} 
\end{equation}
takes its values in $\Omega$, for every $t\in\RR$.
\\
In what follows this property will be referred to as \eqref{eqcondfsta}.
\end{definition}

\begin{definition}
The function $f$ is said to be \emph{locally Lipschitz continuous with respect to the first variable at left-dense points} if, for every $\overline{x} \in \Omega$ and every left-dense point $\overline{t}\in\T \backslash \{ \inf \T \}$, there exist $R > 0$, $\delta > 0$ and $L \geq 0$ such that $\overline{B}(\overline{x},R) \subset \Omega$ and $\overline{t}-\delta \in \T$ and such that
\begin{equation}\label{eqcondldloc-Lip}\tag{H${}^{\mathrm{ld}}_{\mathrm{loc-Lip}}$}
\Vert f(x_1,t) - f(x_2,t) \Vert \leq L \Vert x_1 - x_2 \Vert,
\end{equation}
for all $x_1, x_2 \in \overline{B}(\overline{x},R)$ and for $\DD$-a.e. $t \in [\overline{t}-\delta,\overline{t}[_\T$.
\\
In what follows this property will be referred to as \eqref{eqcondldloc-Lip}.
\end{definition}

\begin{definition}
The function $f$ is said to be \emph{backward regressive at right-scattered points} if
\begin{equation}\label{eqcondbreg}\tag{H${}^{\mathrm{back}}_{\mathrm{regr}}$}
G^+(t) \; \text{is invertible},
\end{equation}
for every $t\in\RR$.
\\
In what follows this property will be referred to as \eqref{eqcondbreg}.
\end{definition}

Assumption \eqref{eqcondinfini} will be instrumental to provide a $\DD$-integral characterization of the solutions of $\mathrm{(\DD\text{-}CP)}$ (see Lemma \ref{prop21-1} in Section \ref{app11}). The other assumptions play a role in order to \textit{go forward} or \textit{backward} for a solution of a non shifted $\DD$-Cauchy problem. More precisely, \eqref{eqcondrdloc-Lip} and \eqref{eqcondfsta} allow to go forward, and \eqref{eqcondldloc-Lip} and \eqref{eqcondbreg} allow to go backward (see the proofs of Propositions \ref{prop21-3} and \ref{prop21-4} in Section \ref{app11} for more details).

In view of investigating global solutions, the following definition will be also useful.

\begin{definition}
The function $f$ is said to be \emph{globally Lipschitz continuous} if there exists $L\geq 0$ such that
\begin{equation}\label{eqcondgloblip}\tag{H$^{\mathrm{glob}}_{\mathrm{Lip}}$}
\Vert f(x_1,t) - f(x_2,t) \Vert \leq L \Vert x_1 - x_2 \Vert.
\end{equation}
for all $x_1,x_2\in\Omega$ and for $\DD$-a.e. $t \in \T \backslash \{ \sup \T \}$.
\\
In what follows this property will be referred to as \eqref{eqcondgloblip}.
\end{definition}

\subsection{Definition of a maximal solution}\label{section21}
We first define the notion of a solution of $\mathrm{(\DD\text{-}CP)}$ on an interval $[a,b]_\T$ with $\ab$.

\begin{definition}
Let $(a, b) \in \T^2$ be such that $\ab$ and let $q:[a,b]_\T\rightarrow\Omega$. The couple $(q,[a,b]_\T)$ is said to be a solution of $\mathrm{(\DD\text{-}CP)}$ if $q \in \AC([a,b]_\T)$, if $q(t_0) = q_0$, and if $q^\DD (t) = f(q(t),t)$ for $\DD$-a.e. $t \in [a,b[_\T$.
\end{definition}

Note that, if $(q,[a,b]_\T)$ is a solution of $\mathrm{(\DD\text{-}CP)}$, then $(q,[a',b']_\T)$ is as well a solution of $\mathrm{(\DD\text{-}CP)}$ for all $a',b' \in [a,b]_\T$ satisfying $a' \trianglelefteq t_0 \trianglelefteq b'$. 

In view of defining the notion of a solution of $\mathrm{(\DD\text{-}CP)}$ on more general intervals, we set
$$\I = \{ I_\T\ \vert\ \exists a, b \in I_\T, \; \ab \}.$$
The set $\I$ is the set of potential intervals of $\T$ for a solution of  $\mathrm{(\DD\text{-}CP)}$. 

\begin{definition}
Let $I_\T \in \I$ and let $q:I_\T\rightarrow\Omega$. The couple $(q,I_\T)$ is said to be a solution of $\mathrm{(\DD\text{-}CP)}$ if $(q,[a,b]_\T)$ is a solution of $\mathrm{(\DD\text{-}CP)}$ for all $a,b \in I_\T$ satisfying $\ab$.
\end{definition}

Finally, we define the concept of a maximal solution.

\begin{definition}
Let $(q,I_\T)$ and $(q_1,I^1_\T)$ be two solutions of $\mathrm{(\DD\text{-}CP)}$. The solution $(q_1,I^1_\T)$ is said to be an extension of the solution $(q,I_\T)$ if $I_\T \subset I^1_\T$ and $q_1 = q$ on $I_\T$.
A solution $(q,I_\T)$ of $\mathrm{(\DD\text{-}CP)}$ is said to be maximal if, for every extension $(q_1,I^1_\T)$ of $(q,I_\T)$, there holds $I^1_\T = I_\T$.
A solution $(q,I_\T)$ of $\mathrm{(\DD\text{-}CP)}$ is said to be global if $I_\T= \T$.
\end{definition}

Note that, if $(q,I_\T)$ is a global solution of $\mathrm{(\DD\text{-}CP)}$, then $(q,I_\T)$ is a maximal solution of $\mathrm{(\DD\text{-}CP)}$.

\subsection{Main results}\label{section22}
Recall that we consider the general non shifted $\DD$-Cauchy problem
\begin{equation*}
\mathrm{(\DD\text{-}CP)}  \begin{split}
q^\DD(t) &= f (q(t),t), \\
q(t_0)&=q_0,
\end{split} 
\end{equation*}
where $t_0 \in \T$, $q_0 \in \Omega$, where $\Omega$ is a non empty open subset of $\R^n$, and $f:\Omega \times \T \setminus \{ \sup \T \}\rightarrow\R^n$ is a $\DD$-Carath\'eodory function.
We have the following general Cauchy-Lipschitz result.

\begin{theorem}\label{thm21-1}
We make the following assumptions on the dynamics $f$, depending on $t_0$.

\begin{enumerate}
\item If $t_0 = \min \T$, then we assume that
\begin{itemize}
\item $f$ satisfies \eqref{eqcondinfini}, that is, $f$ is locally bounded on $\Omega \times \T \backslash \{ \sup \T \}$;
\item $f$ satisfies \eqref{eqcondrdloc-Lip}, that is, $f$ is locally Lipschitz continuous with respect to the first variable at right-dense points;
\item $f$ satisfies \eqref{eqcondfsta}, that is, $f$ is forward $\Omega$-stable at right-scattered points.
\end{itemize}

\item If $t_0 = \max \T$, then we assume that
\begin{itemize}
\item $f$ satisfies \eqref{eqcondinfini}, that is, $f$ is locally bounded on $\Omega \times \T \backslash \{ \sup \T \}$;
\item $f$ satisfies \eqref{eqcondldloc-Lip}, that is, $f$ is locally Lipschitz continuous with respect to the first variable at left-dense points;
\item $f$ satisfies \eqref{eqcondbreg}, that is, $f$ is backward regressive in right-scattered points.
\end{itemize}

\item If $t_0 \neq \inf \T$ and $t_0 \neq \sup \T$, then we assume that
\begin{itemize}
\item $f$ satisfies \eqref{eqcondinfini}, that is, $f$ is locally bounded on $\Omega \times \T \backslash \{ \sup \T \}$;
\item $f$ satisfies \eqref{eqcondrdloc-Lip}, that is, $f$ is locally Lipschitz continuous with respect to the first variable at right-dense points;
\item $f$ satisfies \eqref{eqcondfsta}, that is, $f$ is forward $\Omega$-stable at right-scattered points;
\item $f$ satisfies \eqref{eqcondldloc-Lip}, that is, $f$ is locally Lipschitz continuous with respect to the first variable at left-dense points;
\item $f$ satisfies \eqref{eqcondbreg}, that is, $f$ is and backward regressive in right-scattered points.
\end{itemize}
\end{enumerate}
Then, the non shifted $\DD$-Cauchy problem $\mathrm{(\DD\text{-}CP)}$ has a unique maximal solution $(q,I_\T)$. Moreover, $(q,I_\T)$ is the maximal extension of any other solution of $\mathrm{(\DD\text{-}CP)}$.
\end{theorem}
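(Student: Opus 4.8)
The plan is to build the maximal solution by gluing together \emph{all} solutions, the crux being a uniqueness statement on overlapping domains. Throughout I would work with the $\DD$-integral reformulation of $\mathrm{(\DD\text{-}CP)}$ coming from Proposition~\ref{prop13-1}: a couple $(q,I_\T)$ is a solution if and only if $q \in \AC$ and $q(t) = q_0 + \int_{[t_0,t[_\T} f(q(\tau),\tau)\,\DD\tau$ for $t \geq t_0$, together with the symmetric formula for $t \leq t_0$, as recorded in Lemma~\ref{prop21-1}. The local existence and uniqueness of a solution going forward (resp.\ backward) from a given point --- obtained by a contraction argument at right-dense (resp.\ left-dense) points and by the explicit map $G^+$ at right-scattered points --- is precisely the content of Propositions~\ref{prop21-3} and~\ref{prop21-4}, which I would take as the two elementary building blocks.

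First I would prove that any two solutions coincide on the intersection of their domains. Given solutions $(q_1,I^1_\T)$ and $(q_2,I^2_\T)$, set $J = I^1_\T \cap I^2_\T$; this is an element of $\I$ containing $t_0$, and I treat the forward part $J \cap [t_0,+\infty)$ and the backward part $J \cap (-\infty,t_0]$ separately and symmetrically. For the forward part, consider $\tau = \sup\{\, t \in J,\ t \geq t_0 \ :\ q_1 = q_2 \text{ on } [t_0,t]_\T \,\}$. Continuity of $q_1,q_2$ forces the agreement to persist up to $\tau$, so $q_1(\tau)=q_2(\tau)$; if $\tau$ were not the right endpoint of $J$, I would extend the agreement strictly beyond $\tau$ --- by $q_i(\sigma(\tau)) = G^+(\tau)(q_i(\tau))$ when $\tau$ is right-scattered, and by the local forward uniqueness of Proposition~\ref{prop21-3}, resting on \eqref{eqcondrdloc-Lip} and \eqref{eqcondfsta}, when $\tau$ is right-dense --- contradicting the definition of $\tau$. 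Hence $q_1 = q_2$ on the whole forward part, and the backward part is handled identically using \eqref{eqcondldloc-Lip}, \eqref{eqcondbreg} and Proposition~\ref{prop21-4}.

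With this overlap-uniqueness in hand, I would construct the maximal solution as a union. Let $\mathcal{S}$ be the set of all solutions of $\mathrm{(\DD\text{-}CP)}$, nonempty by the local existence in Propositions~\ref{prop21-3} and~\ref{prop21-4}; put $I_\T = \bigcup_{(q_j,I^j_\T) \in \mathcal{S}} I^j_\T$ and define $q$ on $I_\T$ by $q(t) = q_j(t)$ whenever $t \in I^j_\T$. The uniqueness just established makes $q$ well defined. Since each $I^j_\T$ is an interval of $\T$ containing $t_0$, their union $I_\T$ is again an interval of $\T$ lying in $\I$; and for any $a,b \in I_\T$ with $\ab$, the pieces $[a,t_0]_\T$ and $[t_0,b]_\T$ are each contained in a single member of $\mathcal{S}$, so $q$ is absolutely continuous on $[a,b]_\T$, equals $q_0$ at $t_0$, and satisfies $q^\DD = f(q,\cdot)$ $\DD$-a.e. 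Thus $(q,I_\T)$ is a solution. By construction $I^j_\T \subset I_\T$ and $q = q_j$ on $I^j_\T$ for every $j$, so $(q,I_\T)$ extends every solution; it is therefore the unique maximal solution and the maximal extension of any other solution.

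The step I expect to be the main obstacle is the overlap-uniqueness argument, and within it the interplay between the time-scale topology and the supremum $\tau$: one must verify that the agreement set is genuinely closed, via the continuity of $q_1,q_2$, so that $\tau$ is attained and no left-dense escape occurs, and that the extension step truly crosses $\tau$ in both regimes --- the right-scattered case, where ``beyond $\tau$'' means the isolated successor $\sigma(\tau)$, and the right-dense case, where it means a genuine one-sided neighbourhood in $\T$. These are exactly the two situations for which the tailored hypotheses \eqref{eqcondrdloc-Lip}, \eqref{eqcondfsta} and their backward analogues \eqref{eqcondldloc-Lip}, \eqref{eqcondbreg} were designed.
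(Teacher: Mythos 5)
Your proposal is correct, and its analytic core coincides with the paper's: the same $\DD$-integral characterization of solutions (Lemma~\ref{prop21-1}), local solvability by the Banach fixed point theorem at dense points and by the map $G^+$ (or its inverse) at scattered points (Proposition~\ref{prop21-3}), and a supremum-based connectedness argument for agreement of two solutions on the intersection of their domains, split into the right-scattered and right-dense regimes (Proposition~\ref{prop21-4}). Where you genuinely differ is in the assembly of the maximal solution: the paper first shows, by Zorn's lemma applied to the ordered set of extensions of a given solution (Lemma~\ref{prop21-2}), that every solution extends to \emph{some} maximal solution --- a step that uses only \eqref{eqcondinfini} and no uniqueness whatsoever --- and then combines this with overlap uniqueness; you instead prove overlap uniqueness first and define the maximal solution as the union of \emph{all} solutions, which is well defined precisely because of that uniqueness. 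Your route is the more elementary one (no appeal to Zorn's lemma), and it yields the final clause of the theorem --- that $(q,I_\T)$ extends every other solution --- immediately by construction, whereas the paper's route keeps the extension-to-a-maximal-solution statement valid under weaker hypotheses (useful if uniqueness fails). Two points of your sketch should be written out in full: (i) when the supremum $\tau$ of the agreement set is left-scattered, its attainment is not a consequence of continuity but of the observation that otherwise the supremum would be at most $\rho(\tau)<\tau$; and (ii) the ``local forward uniqueness'' you invoke at a right-dense $\tau$ is not the statement of the existence result but the uniqueness clause of the Banach fixed point theorem applied to the contraction built at $(q_1(\tau),\tau)$, after shrinking the interval so that both $q_1$ and $q_2$ take values in the ball $\overline{B}(q_1(\tau),R)$ --- which is exactly how the paper argues in its Proposition~\ref{prop21-4}, so these are gaps of exposition, not of substance.
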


This theorem is proved in Section \ref{app11}.
The following result gives information on the behavior of a maximal solution at its terminal points.

\begin{theorem}\label{thm21-2}
Under the assumptions of Theorem~\ref{thm21-1}, let $(q,I_\T)$ be the maximal solution of the non shifted $\DD$-Cauchy problem $\mathrm{(\DD\text{-}CP)}$. Then either $I_\T = \T$, that is, the maximal solution $(q,I_\T)$ is global, or the maximal solution is not global and then
\begin{enumerate}
\item if $t_0 = \min \T$ then $I_\T = [t_0,b[_\T$ where $b \in ]t_0,+\infty[_\T$ is a left-dense point of $\T$;
\item if $t_0 = \max \T$ then $I_\T = ]a,t_0]_\T$ where $a \in ]-\infty,t_0[_\T$ is a right-dense point of $\T$;
\item if $t_0 \neq \inf \T$ and $t_0 \neq \sup \T$ then $I_\T = ]a,+\infty[_\T$ or $I_\T = ]-\infty,b[_\T$ or $I_\T = ]a,b[_\T$, where $a \in ]-\infty,t_0[_\T$ is a right-dense point of $\T$ and $b \in ]t_0,+\infty[_\T$ is a left-dense point of $\T$;
\end{enumerate}
and moreover, for every $K \in \KK$ there exists $t\in I_\T$ (close to $a$ or $b$ depending on the cases listed above) such that $q(t)\in\Omega\setminus K$.
\end{theorem}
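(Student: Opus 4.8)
The plan is to determine the possible shapes of the interval $I_\T$ by an order-theoretic analysis combined with the forward and backward local existence results (Propositions~\ref{prop21-3} and~\ref{prop21-4}), and then to establish the escape-from-compacts property by a contradiction argument based on the local boundedness \eqref{eqcondinfini} and the $\DD$-integral characterization of solutions (Proposition~\ref{prop13-1}).

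First I would analyze the right endpoint $b^* = \sup I_\T$ (the left endpoint being symmetric under time reversal, using \eqref{eqcondldloc-Lip} and \eqref{eqcondbreg} in place of \eqref{eqcondrdloc-Lip} and \eqref{eqcondfsta}). I claim that $I_\T$ cannot possess a maximum $c$ with $c < \sup \T$. Indeed, if $c = \max I_\T < \sup \T$, then $c$ is either right-scattered, in which case \eqref{eqcondfsta} allows one to set $q(\sigma(c)) = G^+(c)(q(c)) \in \Omega$ and check $q^\DD(c) = f(q(c),c)$, or right-dense, in which case \eqref{eqcondinfini}--\eqref{eqcondrdloc-Lip} provide a forward local solution on $[c,c+\delta[_\T$ (Proposition~\ref{prop21-3}); by the uniqueness part of Theorem~\ref{thm21-1} the new solution glues with $(q,I_\T)$ into a strict extension, contradicting maximality. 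Hence either $\max I_\T = \max \T$, or $I_\T$ has no maximum, so that $b^* \notin I_\T$. In the latter case, if $b^* < +\infty$ then $b^* \in \T$ since $\T$ is closed, and $b^*$ is necessarily left-dense: were it left-scattered, the absence of points of $\T$ in $]\rho(b^*),b^*[$ together with $b^* \notin I_\T$ would force $I_\T \subset\, ]-\infty,\rho(b^*)]$, whence $\sup I_\T \leq \rho(b^*) < b^*$, a contradiction.

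Next I would assemble these dichotomies for the three cases, reading the notation $]a,+\infty[_\T$ and $]-\infty,b[_\T$ with the convention that, when $\T$ is bounded, they intersect $\T$ to $]a,\max \T]_\T$ and $[\min \T,b[_\T$; thus the ``right-complete'' alternative above (attained $\max\T$, or unbounded to $+\infty$) is exactly the ``$+\infty$'' endpoint, while the ``stops at a left-dense point'' alternative is the finite endpoint $b$, and symmetrically on the left. When $t_0 = \min \T$ the left endpoint is the attained point $t_0$ and only the right dichotomy survives, giving either $I_\T = \T$ or $I_\T = [t_0,b[_\T$ with $b$ left-dense; the case $t_0 = \max \T$ is symmetric; and when $t_0 \neq \inf \T, \sup \T$ the two independent dichotomies yield $I_\T = \T$ if both sides are complete, and otherwise one of $]a,+\infty[_\T$, $]-\infty,b[_\T$ or $]a,b[_\T$, with $a$ right-dense and $b$ left-dense as claimed.

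Finally I would prove the escape property at a finite terminal point, say a finite left-dense right endpoint $b$ (the left endpoint being analogous). Arguing by contradiction, suppose there exist $K \in \KK$ and $t_1 \in I_\T$ with $q(t) \in K$ for all $t \in [t_1,b[_\T$. Then \eqref{eqcondinfini} provides $M \geq 0$ with $\Vert f(q(t),t)\Vert \leq M$ for $\DD$-a.e. $t \in [t_1,b[_\T$, so $f(q(\cdot),\cdot) \in \L^1_\T([t_0,b[_\T,\R^n)$, and Proposition~\ref{prop13-1} gives $\Vert q(t)-q(s)\Vert \leq M(t-s)$ for all $s \leq t$ in $[t_1,b[_\T$. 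The Cauchy criterion then yields a limit $\ell = \lim_{t \to b^-} q(t)$, lying in the closed set $K \subset \Omega$. Setting $q(b) = \ell = q_0 + \int_{[t_0,b[_\T} f(q(\tau),\tau)\,\DD\tau$, Proposition~\ref{prop13-2} shows the extended map is in $\AC([t_0,b]_\T)$ and satisfies $q^\DD = f(q,\cdot)$ $\DD$-a.e. on $[t_0,b[_\T$, so $(q,[t_0,b]_\T)$ strictly extends $(q,I_\T)$ since $b \in \T$, contradicting maximality. I expect the main obstacle to be the shape analysis rather than this last step: one must handle the genuinely time-scale phenomenon that a maximal solution can only fail to extend at a \emph{dense} terminal point (at a scattered endpoint the algebraic step $G^+$ or a one-step Picard iterate always pushes one point further), and one must verify that the compact ``$\pm\infty$'' notation correctly absorbs the subcases where $\T$ attains its extrema; the escape argument is then a routine consequence of \eqref{eqcondinfini} and Proposition~\ref{prop13-2}.
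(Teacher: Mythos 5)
Your proposal is correct and takes essentially the same route as the paper: the paper's Proposition~\ref{prop21-5} likewise rules out an attained endpoint $c<\sup\T$ by re-solving at $c$ (the argument of Proposition~\ref{prop21-3}) and gluing, and dismisses a left-scattered supremum by rewriting $[t_0,b[_\T=[t_0,\rho(b)]_\T$. Your merged escape argument (Lipschitz bound from \eqref{eqcondinfini}, uniform continuity, limit $\ell\in K\subset\Omega$, then extension via the integral representation) is exactly the paper's Lemmas~\ref{prop21-6} and~\ref{prop21-7} combined, with Lemma~\ref{prop21-1} plus dominated convergence playing the role you assign to Proposition~\ref{prop13-2}.
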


This theorem is proved in Section \ref{app12}. It states that the maximal solution must go out of any compact of $\Omega$ near its terminal points whenever it is not global.

The following last result states that, under global Lipschitz assumption, the maximal solution is global.

\begin{theorem}\label{thm22-1}
If $t_0 = \min \T$, $\Omega= \R^n$, if $f$ satisfies \eqref{eqcondinfini}, that is, $f$ is locally bounded on $\R^n \times \T \backslash \{ \sup \T \}$, and if $f$ satisfies \eqref{eqcondgloblip}, that is, $f$ is globally Lipschitz continuous, then the non shifted $\DD$-Cauchy problem $\mathrm{(\DD\text{-}CP)}$ has a unique maximal solution $(q,I_\T)$, which is moreover global.
\end{theorem}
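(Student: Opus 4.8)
The plan is to derive this as a corollary of Theorem~\ref{thm21-1} and Theorem~\ref{thm21-2} together with an a priori bound coming from the global Lipschitz assumption. First I would verify that the hypotheses of Theorem~\ref{thm21-1} in the case $t_0 = \min \T$ are satisfied. Since $\Omega = \R^n$, the map $G^+(t) : x \mapsto x + \mu(t) f(x,t)$ of \eqref{eqcondfsta} automatically takes its values in $\R^n = \Omega$, so \eqref{eqcondfsta} holds trivially. Moreover \eqref{eqcondgloblip} implies \eqref{eqcondrdloc-Lip}: given $\overline{x} \in \R^n$ and a right-dense point $\overline{t} \in \T \backslash \{ \sup \T \}$, any $R > 0$ satisfies $\overline{B}(\overline{x},R) \subset \R^n = \Omega$, right-density of $\overline{t}$ furnishes some $\delta > 0$ with $\overline{t} + \delta \in \T$, and the global constant $L$ serves as a Lipschitz constant on $[\overline{t},\overline{t}+\delta[_\T$. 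Combined with the assumed \eqref{eqcondinfini}, Theorem~\ref{thm21-1} then yields a unique maximal solution $(q,I_\T)$, which is the maximal extension of any solution.

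Next I would argue by contradiction, assuming that $(q,I_\T)$ is not global. By Theorem~\ref{thm21-2} in the case $t_0 = \min \T$, we would have $I_\T = [t_0,b[_\T$ for some left-dense point $b \in ]t_0,+\infty[_\T$, and moreover $q$ must leave every compact subset of $\R^n$ arbitrarily close to $b$. To contradict this escape property I would establish a uniform bound on $\Vert q \Vert$ over $[t_0,b[_\T$. For every $t \in {]t_0,b[_\T}$ the couple $(q,[t_0,t]_\T)$ is a solution, so Proposition~\ref{prop13-1} (the case $t \geq t_0$) gives the $\DD$-integral representation $q(t) = q_0 + \int_{[t_0,t[_\T} f(q(\tau),\tau) \, \DD\tau$.

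From here I would estimate, using \eqref{eqcondgloblip}, that $\Vert f(x,\tau) \Vert \leq \Vert f(x,\tau) - f(0,\tau) \Vert + \Vert f(0,\tau) \Vert \leq L \Vert x \Vert + \Vert f(0,\tau) \Vert$, and bound $\Vert f(0,\tau) \Vert \leq M$ for $\DD$-a.e. $\tau \in [t_0,b[_\T$ by applying \eqref{eqcondinfini} with the compact $K = \{ 0 \}$ on the finite interval $[t_0,b]_\T$. Since $\mu_\DD([t_0,t[_\T) = t - t_0 \leq b - t_0$, this yields $\Vert q(t) \Vert \leq C + L \int_{[t_0,t[_\T} \Vert q(\tau) \Vert \, \DD\tau$ with $C = \Vert q_0 \Vert + M(b-t_0)$. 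A Grönwall inequality on time scales then gives $\Vert q(t) \Vert \leq C \, e^{L(b-t_0)}$ for every $t \in [t_0,b[_\T$. Taking $K = \overline{B}(0, C\, e^{L(b-t_0)})$, which is compact in $\R^n = \Omega$, would contradict the conclusion of Theorem~\ref{thm21-2} that $q$ leaves $K$ near $b$. Hence $I_\T = \T$ and the maximal solution is global.

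The main obstacle is the a priori bound: one must ensure that the Grönwall argument is valid for the Lebesgue $\DD$-integral and not merely for the rd-continuous $\Delta$-integral, and that the time-scale exponential factor is controlled by $e^{L(b-t_0)}$, which follows from $\log(1+\mu L) \leq \mu L$. Everything then hinges on the finiteness of $b$ guaranteed by Theorem~\ref{thm21-2}, which is exactly what keeps the constants $M(b-t_0)$ and $e^{L(b-t_0)}$ finite and closes the contradiction.
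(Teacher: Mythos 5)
Your proposal is correct, but it follows a genuinely different route from the paper's. The hypothesis-checking step is identical: $\Omega=\R^n$ makes \eqref{eqcondfsta} automatic and \eqref{eqcondgloblip} implies \eqref{eqcondrdloc-Lip}, so Theorem~\ref{thm21-1} applies. After that, however, the paper never invokes Theorem~\ref{thm21-2}: it constructs the global solution directly by a Picard-type argument, defining $F$ on $\CC(\T,\R^n)$ by $F(q)(t)=q_0+\int_{[t_0,t[_\T}f(q(\tau),\tau)\;\DD\tau$, proving the estimate $\int_{[t_0,t[_\T}(\tau-t_0)^k\;\DD\tau\le\frac{(t-t_0)^{k+1}}{k+1}$ (Lemma~\ref{lem22-1}), deducing by induction the factorial bound $\Vert F^k(q_1)(t)-F^k(q_2)(t)\Vert\le\frac{L^k}{k!}\Vert q_1-q_2\Vert_\infty(t-t_0)^k$, and concluding that some iterate of $F$ is a contraction; its unique fixed point is a global solution, hence the (unique) maximal one. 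Your blow-up-alternative argument instead plays a Gr\"onwall a priori bound against the escape-from-compacts property of Theorem~\ref{thm21-2}. The trade-offs are as follows. The paper's proof is self-contained: its only technical ingredient is Lemma~\ref{lem22-1}, which is reused verbatim for the shifted case (Lemma~\ref{lem32-1}). Your proof hinges on a Gr\"onwall inequality valid for the Lebesgue $\DD$-integral, which the paper nowhere establishes; you rightly flag this as the main obstacle, and it is fillable --- either by noting that $\Vert q\Vert$ is continuous, so that on compact intervals the Lebesgue $\DD$-integral coincides with the Riemann $\DD$-integral of a continuous integrand and the standard time-scale Gr\"onwall lemma applies (with $e_L(t,t_0)\le e^{L(t-t_0)}$, as you observe), or, more economically, by iterating your integral inequality, which requires exactly the estimate of Lemma~\ref{lem22-1} again. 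Conversely, your argument has a real advantage: it works verbatim on an unbounded time scale, since the contradiction lives on the bounded interval $[t_0,b[_\T$ with $b\in\T$ finite, whereas the paper's contraction estimate, as written, uses the sup-norm on all of $\CC(\T,\R^n)$ and a factor $(b-a)$, implicitly requiring $\T$ to be bounded.
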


The proof is done in Section \ref{app13}.

\begin{remark}\label{rem1}
As an application of Theorem \ref{thm22-1}, we recover the well known fact that, in the linear case
$$q^\DD(t) = h(t) \times q(t),$$
where $h \in \L^\infty_\T (\T \backslash \{ \sup \T \},\R^{n\times n})$, solutions are global.
\end{remark}

\subsection{Further comments}\label{section24}
In this section, we provide simple examples (in the one-dimensional case) showing the sharpness of the assumptions made in Theorem~\ref{thm21-1}. Indeed, if one of these assumptions is not satisfied, then the existence or the uniqueness of the maximal solution is no more guaranteed.

\begin{example}[Lack of Assumption~\eqref{eqcondrdloc-Lip} in the first case]
Let $\T = [0,+\infty[$, $\Omega = \R$, $t_0 = 0$, $q_0 = 0$ and $f:\R\times\T\rightarrow\R$ be defined by $f(x,t)=2 \sqrt{\vert x \vert}$.
The function $f$ obviously satisfies \eqref{eqcondfsta} since $\RR = \emptyset$, however it does not satisfy \eqref{eqcondrdloc-Lip}. The corresponding $\DD$-Cauchy problem $\mathrm{(\DD\text{-}CP)}$ has two global solutions $q_1$ and $q_2$ given by $q_1(t) = 0$ and $q_2(t) = t^2$, for every $t\in\T$.

This example shows that, in the absence of Assumption~\eqref{eqcondrdloc-Lip}, the uniqueness of the maximal solution is not guaranteed.
\end{example}

\begin{example}[Lack of Assumption~\eqref{eqcondfsta} in the first case]
Let $\T = \{ 0,1 \}$, $\Omega = ]-1,1[$, $t_0 = 0$, $q_0 = 0$ and $f:\Omega\times\{0\}\rightarrow\R$ be defined by $f(x,t)=1$.
The function $f$ obviously satisfies \eqref{eqcondrdloc-Lip} since $\T \backslash \{ \sup \T \} = \{ 0 \}$ does not admit any right-dense point of $\T$, however it does not satisfy \eqref{eqcondfsta} since $x+1 \notin \Omega$ for $x \in [0,1[$. Since $q(0) = 0$ and $q(1)=q(0)+\mu(0) f(q(0),0)$ imply $q(1) = 1 \notin \Omega$, we conclude that $\mathrm{(\DD\text{-}CP)}$ does not admit any solution.

Therefore, in the absence of Assumption~\eqref{eqcondfsta}, $\mathrm{(\DD\text{-}CP)}$ may fail to have a solution.
\end{example}

\begin{example}[Lack of Assumption~\eqref{eqcondldloc-Lip} in the second case]
Let $\T = ]-\infty,0]$, $\Omega = \R$, $t_0 = 0$, $q_0 = 0$ and $f:\R\times\T\rightarrow\R$ be defined by $f(x,t)=-2 \sqrt{\vert x \vert}$.
The function $f$ obviously satisfies \eqref{eqcondbreg} since $\RR = \emptyset$, however it does not satisfy \eqref{eqcondldloc-Lip}. The corresponding $\DD$-Cauchy problem $\mathrm{(\DD\text{-}CP)}$ ha two global solutions $q_1$ and $q_2$ given by $q_1(t) = 0$ and $q_2(t) = t^2$ for every $t\in\T$.

This example shows that, in the absence of Assumption~\eqref{eqcondldloc-Lip}, the uniqueness of the maximal solution is not guaranteed.
\end{example}

\begin{example}[Lack of Assumption~\eqref{eqcondbreg} in the second case]
Let $\T = \{ 0,1 \}$, $\Omega = \R$, $t_0 = 1$, $q_0 \in \R$ and $f:\R \times \{ 0 \}\rightarrow\R$ be defined by $f(x,t)=-x$.
The function $f$ obviously satisfies \eqref{eqcondldloc-Lip} since $\T \backslash \{ \inf \T \} = \{ 1 \}$ does not admit any left-dense point of $\T$, however it does not satisfy \eqref{eqcondbreg} since $G^+(0) = 0$. As a consequence, if $q_0 \neq 0$, $\mathrm{(\DD\text{-}CP)}$ does not admit any solution. Indeed, $q(1)=q_0$ and $q(1)=q(0)+\mu (0) f(q(0),0)$ imply $q(1)=0$, which is a contradiction. If $q_0 = 0$, we obtain an infinite number of global solutions. Indeed, any function $q$ defined on $\T$ with $q(1) = 0$ is then a global solution of $\mathrm{(\DD\text{-}CP)}$.
\end{example}

\section{General shifted $\DD$-Cauchy problem}\label{section3}
Throughout this section we consider the general \textit{shifted} $\DD$-Cauchy problem
\begin{equation*}
\mathrm{(\DD\text{-}CP^\sigma)} \begin{split}
q^\DD(t) & = f (q^\sigma(t),t), \\
q(t_0)&=q_0,
\end{split} 
\end{equation*}
where $t_0 \in \T$, $q_0 \in \Omega$, where $\Omega$ is a non empty open subset of $\R^n$ and $f:\Omega \times \T \backslash \{ \sup \T \}\rightarrow \R^n$ is a $\DD$-Carath\'eodory function. 

The results of the section follow the same lines as in the previous section. Therefore we do not give any proof nor counterexamples as above. Some comments are however done in Section \ref{app2}. 

\subsection{Preliminaries}\label{section30}
As in Section~\ref{section20}, it will be important to distinguish between three cases:
\begin{enumerate}
\item $t_0 = \min \T$;
\item $t_0 = \max \T$;
\item $t_0 \neq \inf \T$ and $t_0 \neq \sup \T$.
\end{enumerate}

With respect to Section \ref{section20}, we introduce two additional concepts.

\begin{definition}
The function $f$ is said to be \emph{backward $\Omega$-stable at right-scattered points} if the mapping
\begin{equation}\label{eqcondbsta}\tag{H${}^{\mathrm{back}}_{\mathrm{stab}}$}
\fonction{G^-(t)}{\Omega}{\R^n}{x}{x-\mu (t) f(x,t)} 
\end{equation}
takes its values in $\Omega$, for every $t \in \RR$.
\\
In what follows this property will be referred to as \eqref{eqcondbsta}.
\end{definition}

\begin{definition}
The function $f$ is said to be \emph{forward regressive at right-scattered points} if
\begin{equation}\label{eqcondfreg}\tag{H${}^{\mathrm{forw}}_{\mathrm{regr}}$}
\fonctionsansdef{G^-(t)}{\Omega}{\R^n} \; \text{is invertible},
\end{equation}
for every $t \in \RR$.
\\
In what follows this property will be referred to as \eqref{eqcondfreg}.
\end{definition}

These above assumptions play a role in order to \textit{go forward} or \textit{backward} for a solution of a shifted $\DD$-Cauchy problem. Precisely, \eqref{eqcondrdloc-Lip} and \eqref{eqcondfreg} allow to \textit{go forward}. Similarly, \eqref{eqcondldloc-Lip} and \eqref{eqcondbsta} allow to \textit{go backward}. \\

\subsection{Definition of a maximal solution}\label{section31}

\begin{definition}
Let $(a,b) \in \T^2$ satisfying $\ab$ and let $\fonctionsansdef{q}{[a,b]_\T}{\Omega}$. The couple $(q,[a,b]_\T)$ is said to be a solution of $\mathrm{(\DD\text{-}CP^\sigma)}$ if $q \in \AC([a,b]_\T)$, $q(t_0) = q_0$, and $q^\DD (t) = f(q^\sigma(t),t)$ for $\DD$-a.e. $t \in [a,b[_\T$.
\end{definition}

\begin{definition}
Let $I_\T \in \I$ and let $\fonctionsansdef{q}{I_\T}{\Omega}$. The couple $(q,I_\T)$ is said to be a solution of $\mathrm{(\DD\text{-}CP^\sigma)}$ if $(q,[a,b]_\T)$ is a solution of $\mathrm{(\DD\text{-}CP^\sigma)}$ for all $a,b \in I_\T$ satisfying $\ab$.
\end{definition}

\begin{definition}
Let $(q,I_\T)$ and $(q_1,I^1_\T)$ be two solutions of $\mathrm{(\DD\text{-}CP^\sigma)}$. The solution $(q_1,I^1_\T)$ is said to be an extension of the solution $(q,I_\T)$ if $I_\T \subset I^1_\T $ and $q_1 = q$ on $I_\T$.
A solution $(q,I_\T)$ of $\mathrm{(\DD\text{-}CP^\sigma)}$ is said to be \emph{maximal} if, for every extension $(q_1,I^1_\T)$ of $(q,I_\T)$, there holds $I^1_\T = I_\T$.
A solution $(q,I_\T)$ of $\mathrm{(\DD\text{-}CP^\sigma)}$ is said to be \emph{global} if $I_\T= \T$.
\end{definition}

\subsection{Main results}\label{section32}
Recall that we consider the general shifted $\DD$-Cauchy problem
\begin{equation*}
\mathrm{(\DD\text{-}CP^\sigma)} \begin{split}
q^\DD(t) & = f (q^\sigma(t),t), \\
q(t_0)&=q_0,
\end{split}
\end{equation*}
where $t_0 \in \T$, $q_0 \in \Omega$ where $\Omega$ is a non empty open subset of $\R^n$ and $f:\Omega \times \T \backslash \{ \sup \T \}\rightarrow \R^n$ is a  $\DD$-Carath\'eodory function.

\begin{theorem}\label{thm31-1}
We make the following assumptions on the dynamics $f$, depending on $t_0$.
\begin{enumerate}
\item If $t_0 = \min \T$, then we assume that
\begin{itemize}
\item $f$ satisfies \eqref{eqcondinfini}, that is, $f$ is locally bounded on $\Omega \times \T \backslash \{ \sup \T \}$;
\item $f$ satisfies \eqref{eqcondrdloc-Lip}, that is, $f$ is locally Lipschitz continuous with respect to the first variable at right-dense points;
\item $f$ satisfies \eqref{eqcondfreg}, that is, $f$ is forward regressive in right-scattered points.
\end{itemize}

\item If $t_0 = \max \T$, then we assume that
\begin{itemize}
\item $f$ satisfies \eqref{eqcondinfini}, that is, $f$ is locally bounded on $\Omega \times \T \backslash \{ \sup \T \}$;
\item $f$ satisfies \eqref{eqcondldloc-Lip}, that is, $f$ is locally Lipschitz continuous with respect to the first variable at left-dense points;
\item $f$ satisfies \eqref{eqcondbsta}, that is, $f$ is backward $\Omega$-stable in right-scattered points.
\end{itemize}

\item If $t_0 \neq \inf \T$ and $t_0 \neq \sup \T$, then we assume that
\begin{itemize}
\item $f$ satisfies \eqref{eqcondinfini}, that is, $f$ is locally bounded on $\Omega \times \T \backslash \{ \sup \T \}$;
\item $f$ satisfies \eqref{eqcondrdloc-Lip}, that is, $f$ is locally Lipschitz continuous with respect to the first variable at right-dense points;
\item $f$ satisfies \eqref{eqcondfreg}, that is, $f$ is forward regressive at right-scattered points;
\item $f$ satisfies \eqref{eqcondldloc-Lip}, that is, $f$ is locally Lipschitz continuous with respect to the first variable at left-dense points;
\item $f$ satisfies \eqref{eqcondbsta}, that is, $f$ is backward $\Omega$-stable at right-scattered points.
\end{itemize}
\end{enumerate}
Then the shifted $\DD$-Cauchy problem $\mathrm{(\DD\text{-}CP^\sigma)}$ has a unique maximal solution $(q,I_\T)$. Moreover, $(q,I_\T)$ is the maximal extension of any other solution of $\mathrm{(\DD\text{-}CP^\sigma)}$
\end{theorem}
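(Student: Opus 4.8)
The plan is to follow verbatim the structure of the proof of Theorem~\ref{thm21-1}, replacing throughout the argument $q(t)$ feeding the dynamics by the shifted value $q^\sigma(t) = q(\sigma(t))$. First I would record the shifted analogue of the $\DD$-integral characterization of solutions: by Propositions~\ref{prop13-1} and~\ref{prop13-2}, a couple $(q,[a,b]_\T)$ with $\ab$ is a solution of $\mathrm{(\DD\text{-}CP^\sigma)}$ if and only if $q \in \CC([a,b]_\T,\R^n)$ takes its values in $\Omega$, $q(t_0) = q_0$, and
$$q(t) = q_0 + \int_{[t_0,t[_\T} f(q^\sigma(\tau),\tau)\;\DD\tau \quad (t \geq t_0), \qquad q(t) = q_0 - \int_{[t,t_0[_\T} f(q^\sigma(\tau),\tau)\;\DD\tau \quad (t \leq t_0),$$
where \eqref{eqcondinfini} ensures that $\tau \mapsto f(q^\sigma(\tau),\tau)$ lies in $\L^1_\T$ on bounded subintervals so that the integrals make sense. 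With this reformulation, the theorem reduces to a local existence--uniqueness statement --- extending a given solution slightly past its frontier points, forward and backward --- followed by a gluing and maximality argument.

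For the forward local step past a frontier point $\overline{t}$ I distinguish two cases. If $\overline{t}$ is right-dense, I introduce the Picard operator $T(y)(t) = q(\overline{t}) + \int_{[\overline{t},t[_\T} f(y^\sigma(\tau),\tau)\,\DD\tau$ on the complete metric space of continuous functions from $[\overline{t},\overline{t}+\delta]_\T$ into $\overline{B}(q(\overline{t}),R)$, with $R$, $L$ furnished by \eqref{eqcondrdloc-Lip} and $M$ by \eqref{eqcondinfini}. Replacing $\delta$ by a smaller value with $\overline{t}+\delta \in \T$ (possible since $\overline{t}$ is right-dense) so that $M\delta \leq R$ and $L\delta < 1$, and using that $\sigma(\tau) \in [\overline{t},\overline{t}+\delta]_\T$ for $\tau \in [\overline{t},\overline{t}+\delta[_\T$ so that $\Vert y_1^\sigma(\tau)-y_2^\sigma(\tau)\Vert \leq \Vert y_1 - y_2 \Vert_\infty$, the operator $T$ is a contraction whose unique fixed point is the unique local solution; the right-scattered points interior to the interval require no special treatment, the fixed point satisfying $q(r) = G^-(r)(q(\sigma(r)))$ there automatically. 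If instead $\overline{t}$ is right-scattered there is no interval on which to contract, and the forward step becomes genuinely implicit: the identity $q(\overline{t}) = G^-(\overline{t})(q(\sigma(\overline{t})))$ must be solved for $q(\sigma(\overline{t}))$, which is precisely where forward regressivity \eqref{eqcondfreg} enters, yielding $q(\sigma(\overline{t})) = (G^-(\overline{t}))^{-1}(q(\overline{t})) \in \Omega$.

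The backward local step is symmetric, with the roles of the stability and regressivity hypotheses interchanged. At a left-dense frontier point I run the analogous contraction based on $q(t) = q(\overline{t}) - \int_{[t,\overline{t}[_\T} f(y^\sigma(\tau),\tau)\,\DD\tau$ using \eqref{eqcondldloc-Lip} and \eqref{eqcondinfini}. At a left-scattered frontier point $\overline{t}$ the point $\rho(\overline{t})$ is right-scattered and the backward step is now \emph{explicit}, $q(\rho(\overline{t})) = G^-(\rho(\overline{t}))(q(\overline{t}))$, so that backward $\Omega$-stability \eqref{eqcondbsta} is exactly the condition guaranteeing $q(\rho(\overline{t})) \in \Omega$. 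In all four situations the local Lipschitz hypotheses give uniqueness of the local extension, uniqueness at right-scattered points being automatic (a single algebraic evaluation going backward, and the application of the injective inverse $(G^-)^{-1}$ going forward).

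Finally I would assemble the global picture. Given two solutions $(q_1,I^1_\T)$ and $(q_2,I^2_\T)$, their common domain belongs to $\I$ and contains $t_0$; the set where they coincide is nonempty, closed by continuity, and open by the local uniqueness just established, hence equals the whole (connected) common domain. It follows that the union of all solutions is a well-defined single-valued function whose domain $I_\T$ belongs to $\I$, which is a solution by the integral characterization, and which by construction is maximal and is an extension of every other solution. I expect the main obstacle --- and the only genuinely new feature relative to Theorem~\ref{thm21-1} --- to be the \emph{implicit} nature of the forward dynamics at right-scattered points: in the non shifted case going forward is the explicit evaluation $q(\sigma(t)) = G^+(t)(q(t))$, whereas here it requires inverting $G^-(t)$, so one must check both that \eqref{eqcondfreg} makes this inversion well posed and that its value remains in $\Omega$; symmetrically, one must verify in the contraction that the shifted argument $q^\sigma(\tau)$ never leaves the interval on which the supremum norm is measured, which is what closes the Lipschitz estimate.
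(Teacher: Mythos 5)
Your overall strategy is exactly the one the paper intends: the paper proves Theorem~\ref{thm31-1} by transposing the proof of Theorem~\ref{thm21-1}, using the shifted integral characterization (Lemma~\ref{prop31-1}), the key remark that $\sigma(\tau)\in[a,b]_\T$ for every $\tau\in[a,b[_\T$ (so the shifted argument never leaves the interval on which the sup-norm is taken), contractions at dense points, and algebraic steps at scattered points with the roles of regressivity and stability swapped relative to the non shifted case: $(G^-(t))^{-1}$ to go forward (hence \eqref{eqcondfreg}) and $G^-(\rho(t))$ to go backward (hence \eqref{eqcondbsta}). Your local existence analysis captures all of this correctly, and your union-of-all-solutions construction of the maximal solution is a legitimate simplification of the paper's Zorn's lemma argument (Lemma~\ref{prop21-2}), valid once uniqueness on common domains is known.

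The genuine gap is in your uniqueness argument. You assert that the coincidence set $C=\{t\in I^1_\T\cap I^2_\T \,:\, q_1(t)=q_2(t)\}$ is nonempty, closed, and open, ``hence equals the whole (connected) common domain.'' But a time-scale interval is in general \emph{not} connected: for $\T=\Z$, or near any scattered point, $I^1_\T\cap I^2_\T$ is disconnected, and in the discrete case every subset is clopen, so the clopen argument proves nothing. Worse, relative openness of $C$ at a right-scattered point $t\in C$ is automatic (since $[t,\sigma(t)[_\T=\{t\}$ is already a relative neighborhood on that side) and therefore carries no information: it does not propagate the equality $q_1(t)=q_2(t)$ across the gap to $\sigma(t)$. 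What does propagate it is the algebraic step $q_i(\sigma(t))=(G^-(t))^{-1}(q_i(t))$ --- which you do establish --- but the gluing must then be organized not by connectedness but by a supremum (``continuous induction'') argument. This is exactly what the paper does in Proposition~\ref{prop21-4}: setting $\overline{t}$ to be the supremum of the set where the two solutions differ, one proves that $\overline{t}$ lies strictly inside the interval, that $q_1(\overline{t})=q_2(\overline{t})$, and that $\overline{t}$ must be a left-dense point (the left-scattered case being excluded precisely by the algebraic step across the gap), and then a backward contraction at $\overline{t}$ yields the contradiction. Your local ingredients suffice to run this argument verbatim in the shifted case, but as written the deduction ``clopen subset of a connected set'' is invalid on a time scale, and this is precisely the point where the time-scale proof must depart from the classical ODE proof.
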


\begin{theorem}\label{thm31-2}
Under the assumptions of Theorem~\ref{thm31-1}, let $(q,I_\T)$ be the maximal solution of the shifted $\DD$-Cauchy problem $\mathrm{(\DD\text{-}CP^\sigma)}$. Then either $I_\T = \T$, that is, the maximal solution $(q,I_\T)$ is global, or the maximal solution is not global and then
\begin{enumerate}
\item if $t_0 = \min \T$ then $I_\T = [t_0,b[_\T$ where $b \in ]t_0,+\infty[_\T$ is a left-dense point of $\T$;
\item if $t_0 = \max \T$ then $I_\T = ]a,t_0]_\T$ where $a \in ]-\infty,t_0[_\T$ is a right-dense point of $\T$;
\item if $t_0 \neq \inf \T$ and $t_0 \neq \sup \T$ then $I_\T = ]a,+\infty[_\T$ or $I_\T = ]-\infty,b[_\T$ or $I_\T = ]a,b[_\T$ where $a \in ]-\infty,t_0[_\T$ is a right-dense point of $\T$ and $b \in ]t_0,+\infty[_\T$ is a left-dense point of $\T$;
\end{enumerate}
and moreover, for every $K \in \KK$ there exists $t\in I_\T$ (close to $a$ or $b$ depending on the cases listed above) such that $q(t)\in\Omega\setminus K$.
\end{theorem}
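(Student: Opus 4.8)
The plan is to follow exactly the same lines as in the proof of Theorem~\ref{thm21-2}, the only change being that, at a right-scattered point, the relation between $q(t)$ and $q^\sigma(t)=q(\sigma(t))$ is now governed by the map $G^-(t)$: indeed, at such a point the recalled formula $q^\DD(t)=(q^\sigma(t)-q(t))/\mu(t)$ combined with $q^\DD(t)=f(q^\sigma(t),t)$ reads $q(t)=G^-(t)(q^\sigma(t))$. Consequently, \emph{forward} extension through a right-scattered point requires inverting $G^-(t)$, that is \eqref{eqcondfreg}, whereas \emph{backward} extension requires $G^-(t)$ to take its values in $\Omega$, that is \eqref{eqcondbsta}; these play the roles that \eqref{eqcondfsta} and \eqref{eqcondbreg} played in the non shifted case. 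Existence and uniqueness of the maximal solution $(q,I_\T)$ being already granted by Theorem~\ref{thm31-1}, only the description of $I_\T$ and the escape property remain to be established. I would treat the case $t_0=\min\T$ in detail, where the solution can only go forward, and obtain the two other cases by the symmetric backward argument and by concatenation.

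For the shape of the interval, assume $(q,I_\T)$ is not global and set $b=\sup I_\T\in\R\cup\{+\infty\}$. If $b=+\infty$ then $I_\T=[t_0,+\infty[_\T=\T$, contradicting non-globality, so $b<+\infty$. First I would rule out that $I_\T$ is closed on the right below $\max\T$: if $I_\T=[t_0,c]_\T$ with $c<\max\T$, then either $c$ is right-dense, in which case the local forward existence step underlying Theorem~\ref{thm31-1} (through \eqref{eqcondrdloc-Lip}) extends the solution strictly beyond $c$, or $c$ is right-scattered, in which case \eqref{eqcondfreg} lets one set $q(\sigma(c))=(G^-(c))^{-1}(q(c))\in\Omega$ and extend to $[t_0,\sigma(c)]_\T$; both contradict maximality. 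Using that $\T$ is closed, the same right-scattered extension argument forces $b\in\T$ and shows $b$ cannot be left-scattered, since otherwise $\rho(b)$ would be the maximum of $I_\T$, reducing to the previous situation. Hence $I_\T=[t_0,b[_\T$ with $b\in\,]t_0,+\infty[_\T$ a left-dense point, which is the claimed form.

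For the escape property, suppose by contradiction that some $K\in\KK$ satisfies $q(t)\in K$ for all $t\in I_\T$ with $t$ close enough to $b$. Since $b$ is left-dense, for such $t$ one has $\sigma(t)<b$, so $q^\sigma(t)=q(\sigma(t))\in K$ as well, and \eqref{eqcondinfini} yields a constant $M$ with $\Vert q^\DD(t)\Vert=\Vert f(q^\sigma(t),t)\Vert\le M$ for $\DD$-a.e.\ such $t$. The $\DD$-integral representation of Proposition~\ref{prop13-1} then gives $\Vert q(t)-q(s)\Vert\le M\,(t-s)$ for $t_0\le s\le t<b$ near $b$, so $q(t)$ is Cauchy as $t\to b^-$ and its limit $q_b$ lies in the compact set $K\subset\Omega$. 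Setting $q(b)=q_b$ produces a solution on $[t_0,b]_\T$; if $b<\max\T$ this extends further exactly as in the previous paragraph (through \eqref{eqcondrdloc-Lip} or \eqref{eqcondfreg}), contradicting maximality, while if $b=\max\T$ it already makes the solution global, contradicting non-globality. This last step is the main obstacle: showing that confinement to a compact forces a limit at the left-dense terminal point and hence extendability. The cases $t_0=\max\T$ and $t_0$ interior follow symmetrically, the backward extension now relying on \eqref{eqcondldloc-Lip} at left-dense points and on \eqref{eqcondbsta} at right-scattered points.
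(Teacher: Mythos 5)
Your proposal is correct and takes essentially the same route as the paper: the paper (Section~\ref{app2}) proves Theorem~\ref{thm31-2} precisely by transposing the proofs of Proposition~\ref{prop21-5} and Lemmas~\ref{prop21-6}--\ref{prop21-7} to the shifted setting via the integral characterization of Lemma~\ref{prop31-1}, with \eqref{eqcondfreg} and \eqref{eqcondbsta} replacing \eqref{eqcondfsta} and \eqref{eqcondbreg} at right-scattered points, exactly as you do. Your additional observation that left-density of $b$ ensures $\sigma(t)<b$, so that $q^\sigma$ stays in the compact $K$ and \eqref{eqcondinfini} applies, is indeed the key point making the non-shifted escape argument carry over.
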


\begin{theorem}\label{thm32-1}
If $t_0 = \max \T$, $\Omega= \R^n$, if $f$ satisfies \eqref{eqcondinfini}, that is, $f$ is locally bounded on $\R^n \times \T \backslash \{ \sup \T \}$, and if $f$ satisfies \eqref{eqcondgloblip}, that is, $f$ is globally Lipschitz continuous, then, the shifted $\DD$-Cauchy problem $\mathrm{(\DD\text{-}CP^\sigma)}$ has a unique maximal solution $(q,I_\T)$, which is moreover global.
\end{theorem}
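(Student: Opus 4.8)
The plan is to deduce existence and uniqueness from the general theorem of this section, and then to rule out the non-global case by an a priori estimate, the only genuine difficulty being the treatment of the shift $q^\sigma$. First I would check that the present hypotheses imply those of Theorem~\ref{thm31-1} in the case $t_0 = \max \T$. Indeed, since $\Omega = \R^n$, the map $G^-(t):x \mapsto x - \mu(t)f(x,t)$ automatically takes its values in $\R^n = \Omega$, so \eqref{eqcondbsta} holds trivially; and \eqref{eqcondgloblip} immediately yields \eqref{eqcondldloc-Lip} (at a left-dense point $\overline{t}$ one may take any $R>0$, since $\Omega=\R^n$, and any $\delta$ with $\overline{t}-\delta \in \T$, which exists by left-density, the Lipschitz constant being the global one $L$). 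Together with \eqref{eqcondinfini}, Theorem~\ref{thm31-1} applies and provides a unique maximal solution $(q,I_\T)$.

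It then remains to prove that $I_\T = \T$. Arguing by contradiction, I would suppose that $(q,I_\T)$ is not global. By Theorem~\ref{thm31-2} (case $t_0=\max \T$), one has $I_\T = ]a,t_0]_\T$ with $a \in ]-\infty,t_0[_\T$ a right-dense point, and moreover for every $K \in \KK$ there exists $t \in I_\T$ with $q(t) \in \R^n \setminus K$. I will contradict this escape property by a uniform bound on $q$ over $I_\T$. Since $a$ is finite, $t_0 - a < +\infty$; applying \eqref{eqcondinfini} with the compact $\{0\}$ and the pair $(a,t_0)$ furnishes $M \geq 0$ with $\Vert f(0,\tau)\Vert \leq M$ for $\DD$-a.e. $\tau \in [a,t_0[_\T$, and \eqref{eqcondgloblip} then gives $\Vert f(x,\tau)\Vert \leq M + L\Vert x \Vert$ for all $x \in \R^n$ and $\DD$-a.e. such $\tau$.

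Since $q \in \AC([s,t_0]_\T)$ for every $s \in ]a,t_0]_\T$, Proposition~\ref{prop13-1} yields the backward integral representation $q(s) = q_0 - \int_{[s,t_0[_\T} f(q^\sigma(\tau),\tau)\,\DD\tau$. The delicate point is the shifted argument $q^\sigma(\tau)=q(\sigma(\tau))$, and here is the key observation: because the problem is run \emph{backward}, $\sigma$ points inward, namely for $\tau \in [s,t_0[_\T$ one has $s \leq \tau \leq \sigma(\tau) \leq t_0$, so $q^\sigma(\tau)$ is always evaluated on $[s,t_0]_\T$. Setting $\phi(t) = \sup_{u \in [t,t_0]_\T}\Vert q(u)\Vert$, which is finite on each $[t,t_0]_\T$ and nonincreasing in $t$, the estimate $\Vert q^\sigma(\tau)\Vert \leq \phi(\tau) \le \phi(t)$ for $\tau \ge t$ leads, after taking the supremum over $s \in [t,t_0]_\T$, to
\[
\phi(t) \leq \Vert q_0 \Vert + M(t_0 - a) + L\int_{[t,t_0[_\T}\phi(\tau)\,\DD\tau
\]
for all $t \in ]a,t_0]_\T$. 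This is a backward Gr\"onwall-type inequality; since $t_0 - a$ is finite, iterating over finitely many subintervals of $\DD$-measure less than $1/L$ (using that $\phi$ is nonincreasing to solve $\phi \le \text{const} + L(\text{length})\,\phi$ on each block) yields a finite constant $C$ with $\phi(t) \leq C$, hence $\Vert q(t)\Vert \leq C$ for all $t \in I_\T$.

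Consequently $q(I_\T) \subset \overline{B}(0,C)$, a compact subset of $\Omega = \R^n$, which contradicts the escape property of Theorem~\ref{thm31-2} (take $K = \overline{B}(0,C)$). Therefore the non-global case is impossible and $I_\T = \T$, i.e. the maximal solution is global. I expect the main obstacle to be exactly the handling of the shift $q^\sigma$ in the integral inequality; it is resolved by the remark that, for a backward problem, $\sigma(\tau) \in [\tau,t_0]$, so the running supremum $\phi$ already dominates $\Vert q^\sigma(\tau)\Vert$ with no extra work. This is precisely why the shifted backward problem is as well-behaved here as the non-shifted forward one of Theorem~\ref{thm22-1}.
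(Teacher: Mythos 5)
Your reduction step is correct and is exactly the paper's: with $\Omega=\R^n$, assumption \eqref{eqcondbsta} holds trivially and \eqref{eqcondgloblip} implies \eqref{eqcondldloc-Lip}, so Theorem~\ref{thm31-1} gives the unique maximal solution. The gap is in your Gr\"onwall step. Having weakened $\Vert q^\sigma(\tau)\Vert\leq\phi(\sigma(\tau))$ to $\Vert q^\sigma(\tau)\Vert\leq\phi(\tau)$, you must close the inequality
\begin{equation*}
\phi(t) \leq D + L\int_{[t,t_0[_\T}\phi(\tau)\,\DD\tau, \qquad D=\Vert q_0\Vert + M(t_0-a),
\end{equation*}
by partitioning $[a,t_0]_\T$ into finitely many subintervals of $\DD$-measure less than $1/L$. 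Such a partition need not exist: any right-scattered point $r\in[a,t_0[_\T$ with graininess $\mu(r)\geq 1/L$ is an atom with $\mu_\DD(\{r\})=\mu(r)\geq 1/L$, so every subinterval containing $r$ has $\DD$-measure at least $1/L$. Nor can the inequality be closed across such an atom by any other bookkeeping: at $t=r$ it reads $\phi(r)\leq D+L\mu(r)\phi(r)+L\int_{[\sigma(r),t_0[_\T}\phi\,\DD\tau$ with $L\mu(r)\geq 1$, so the $\phi(r)$ term on the right cannot be absorbed. This is not a cosmetic defect: your estimate and iteration never use the shift (they would read verbatim for the non-shifted backward problem), and for the non-shifted backward problem the conclusion is false --- see the paper's own example in Section~\ref{section24} with $\T=\{0,1\}$, $t_0=1$, $f(x,t)=-x$, which is globally Lipschitz with $L=1$, has $\mu(0)=1=1/L$, and admits no solution at all when $q_0\neq 0$. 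The theorem is true only because of the shift, and the shift is precisely what your bound $\Vert q^\sigma(\tau)\Vert\leq\phi(\tau)$ discards.

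The repair is to keep the shift inside the integrand: $\Vert q^\sigma(\tau)\Vert\leq\phi(\sigma(\tau))$. Then at an atom $r$ the integrand involves only $\phi(\sigma(r))$, i.e.\ values of $q$ strictly to the right of $r$, which yields $\phi(r)\leq D+L(t_0-r)\,\phi(\sigma(r))$: atoms are crossed by a finite algebraic step instead of being absorbed, and your block argument then goes through once the (finitely many) atoms with $\mu(r)\geq 1/(2L)$ are isolated as single-point blocks. This mechanism is exactly what the paper encapsulates in Lemma~\ref{lem32-1}: the estimate $\int_{[t,t_0[_\T}(t_0-\sigma(\tau))^k\,\DD\tau\leq (t_0-t)^{k+1}/(k+1)$ is valid for arbitrary graininess only because $\sigma$ sits inside the power. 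The paper's own proof of globality is then direct rather than by contradiction: it does not invoke Theorem~\ref{thm31-2}, but runs the Picard operator $F(q)(t)=q_0-\int_{[t,t_0[_\T}f(q^\sigma(\tau),\tau)\,\DD\tau$ on $\CC(\T,\R^n)$ and uses Lemma~\ref{lem32-1} inductively to show that some iterate $F^k$ is a contraction, whose unique fixed point is a global solution. Your escape-property strategy via Theorem~\ref{thm31-2} is a legitimate alternative (and handles unbounded $\T$ gracefully, since a non-global $I_\T=]a,t_0]_\T$ automatically has $a$ finite), but only after the atom issue above is fixed.
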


\begin{remark}
As in Remark \ref{rem1}, in the linear case the maximal solution of any shifted $\DD$-Cauchy problem is automatically global.
\end{remark}

\section{Proofs of the results}\label{app1}
\subsection{Proof of Theorem~\ref{thm21-1}}\label{app11}
If $f$ satisfies \eqref{eqcondinfini}, then for all $(a,b)\in\T^2$ such that $a<b$, there holds
\begin{equation}\label{eq666}
f(q,t) \in \L^\infty_\T ([a,b[_\T,\R^n) \subset \L^1_\T ([a,b[_\T,\R^n),
\end{equation}
for every $q \in \CC ([a,b]_\T,\R^n)$. Then, from Section \ref{section13}, we have the following $\DD$-integral characterization of the solutions of $\mathrm{(\DD\text{-}CP)}$.

\begin{lemma}\label{prop21-1}
Let $I_\T \in \I$ and let $q:I_\T\rightarrow\Omega$. If $f$ satisfies \eqref{eqcondinfini}, then the couple $(q,I_\T)$ is a solution of $\mathrm{(\DD\text{-}CP)}$ if and only if for all $a,b \in I_\T$ satisfying $\ab$, one has $q \in \CC([a,b]_\T)$ and
\begin{equation*}
q(t) = \left\lbrace \begin{array}{lcc}
q_0 + \int_{[t_0,t[_\T} f(q (\tau),\tau) \; \DD \tau & \text{if} & t \geq t_0, \\
q_0 - \int_{[t,t_0[_\T} f(q (\tau),\tau) \; \DD \tau & \text{if} & t \leq t_0. \\
\end{array} \right.
\end{equation*}
for every $ t \in [a,b]_\T$.
\end{lemma}

This characterization allows one to prove the following result.

\begin{lemma}\label{prop21-2}
If $f$ satisfies \eqref{eqcondinfini}, then every solution of $\mathrm{(\DD\text{-}CP)}$ can be extended to a maximal solution.
\end{lemma}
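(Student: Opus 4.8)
The plan is to apply Zorn's lemma to the set of all extensions of a given solution. Let $(q,I_\T)$ be a solution of $\mathrm{(\DD\text{-}CP)}$ and let $\mathcal{E}$ denote the set of all solutions $(q_1,I^1_\T)$ of $\mathrm{(\DD\text{-}CP)}$ that are extensions of $(q,I_\T)$. This set is nonempty since $(q,I_\T)\in\mathcal{E}$. I would partially order $\mathcal{E}$ by declaring $(q_1,I^1_\T)\preceq(q_2,I^2_\T)$ whenever $(q_2,I^2_\T)$ is an extension of $(q_1,I^1_\T)$, that is, $I^1_\T\subset I^2_\T$ and $q_2=q_1$ on $I^1_\T$; reflexivity, antisymmetry and transitivity are immediate. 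The whole argument then reduces to checking that every chain in $(\mathcal{E},\preceq)$ admits an upper bound, after which Zorn's lemma produces a maximal element of $\mathcal{E}$, which is exactly a maximal solution extending $(q,I_\T)$.

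So let $\{(q_\alpha,I^\alpha_\T)\}_{\alpha}$ be a chain. I would set $J=\bigcup_\alpha I^\alpha$ and $J_\T=J\cap\T$, and define $\bar q:J_\T\to\Omega$ by $\bar q(t)=q_\alpha(t)$ for any index $\alpha$ with $t\in I^\alpha_\T$. Three points must be verified. First, $J_\T\in\I$: a union of a totally ordered family of intervals of $\R$ is an interval, and since each $I^\alpha_\T$ already contains some $[a,b]_\T$ with $\ab$ (all of them contain $I_\T$), so does $J_\T$. Second, $\bar q$ is well defined: if $t\in I^\alpha_\T\cap I^\beta_\T$ then, the chain being totally ordered, one of the two solutions extends the other, whence $q_\alpha(t)=q_\beta(t)$; the values lie in $\Omega$ because each $q_\alpha$ does.

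Third, and this is the only point requiring care, I must check that $(\bar q,J_\T)$ is genuinely a solution of $\mathrm{(\DD\text{-}CP)}$. Here I would exploit the local-to-global nature of the definition: it suffices to prove that $(\bar q,[a,b]_\T)$ is a solution for every $a,b\in J_\T$ with $\ab$. Given such $a,b$, there are indices $\alpha,\beta$ with $a\in I^\alpha_\T$ and $b\in I^\beta_\T$; since the chain is totally ordered, one of $I^\alpha_\T,I^\beta_\T$ contains the other, say $I^\alpha_\T\subset I^\beta_\T$, so that $a,b\in I^\beta_\T$ and $[a,b]_\T\subset I^\beta_\T$. Because $(q_\beta,I^\beta_\T)$ is a solution, $(q_\beta,[a,b]_\T)$ is a solution, and $\bar q=q_\beta$ on $[a,b]_\T$; hence $(\bar q,[a,b]_\T)$ is a solution, the $\DD$-integral form being inherited from Lemma~\ref{prop21-1} under \eqref{eqcondinfini}. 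Thus $(\bar q,J_\T)\in\mathcal{E}$ is an upper bound of the chain.

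Finally, Zorn's lemma yields a maximal element $(q_*,I^*_\T)$ of $\mathcal{E}$. I would conclude by observing that it is a maximal solution in the sense of the definition: any extension of $(q_*,I^*_\T)$ is again an extension of $(q,I_\T)$, hence lies in $\mathcal{E}$ and dominates $(q_*,I^*_\T)$, so by maximality its domain coincides with $I^*_\T$. The main (and rather minor) obstacle is the third step, namely confirming that the amalgamated function inherits the solution property across the union; everything else is a routine instance of the standard Zorn argument for maximal solutions.
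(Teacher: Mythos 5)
Your proposal is correct and follows essentially the same route as the paper: Zorn's lemma applied to the extension-ordered set of extensions, with the upper bound of a chain obtained by taking the union of the domains and gluing the solutions, and the solution property verified on compact subintervals $[a,b]_\T$ (each contained in a single member of the chain) via the characterization of Lemma~\ref{prop21-1}. Your added final check that a Zorn-maximal element is indeed a maximal solution, and your explicit justification that $[a,b]_\T$ lies in one chain member, only make explicit what the paper leaves implicit.
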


\begin{proof}
Let $(q,I_\T)$ be a solution of $\mathrm{(\DD\text{-}CP)}$. Let us define the non empty set $\mathscr{F}$ of extensions of $(q,I_\T)$. The set $\mathscr{F}$ is ordered by
\begin{equation*}
(q_1,I^1_\T) \leq (q_2,I^2_\T) \; \text{if and only if} \; (q_2,I^2_\T) \; \text{is an extension of} \; (q_1,I^1_\T).
\end{equation*}
Let us prove that $\mathscr{F}$ is inductive. Let $\GG = \cup_{p \in \mathscr{P}} \{ (q_p,I^p_\T) \}$ be a non empty totally ordered subset of $\mathscr{F}$. Let us prove that $\GG$ admits an upper bound.

Let us define $\overline{I} = \cup_{p \in \mathscr{P}} I^p $. This is an interval of $\R$, since $t_0 \in \cap_{p \in \mathscr{P}} I^p$. Then $\overline{I}_\T = \cup_{p \in \mathscr{P}} I^p_\T \in \I$. For every $t \in \overline{I}_\T$, there exists $p \in \mathscr{P}$ such that $t \in I^p_\T$ and, since $\GG$ is totally ordered, if $t \in I^{p_1}_\T \cap I^{p_2}_\T$ then $q_{p_1} (t) = q_{p_2} (t)$. Consequently, we can define $\overline{q}$ by
\begin{equation}
\forall t \in \overline{I}_\T, \; \overline{q}(t) = q_p (t) \in \Omega \; \text{where} \; t \in I^p_\T.
\end{equation}
Our aim is to prove that $(\overline{q},\overline{I}_\T)$ is a solution of $\mathrm{(\DD\text{-}CP)}$. Let $a,b \in \overline{I}_\T$ satisfying $\ab$. Since $\GG$ is totally ordered, there exists $p \in \mathscr{P}$ such that $[a,b]_\T \subset I^p_\T$ and $\overline{q} = q_p$ on $[a,b]_\T$. Since $(q_p,I^p_\T) $ is a solution of $\mathrm{(\DD\text{-}CP)}$, we obtain that $q_p$ satisfies the necessary and sufficient condition of Lemma~\ref{prop21-1} on $[a,b]_\T$. Consequently, this holds true as well for $\overline{q}$ on $[a,b]_\T$. Finally, since this last sentence is true for all $a, b \in \overline{I}_\T$ satisfying $\ab$, we infer from Lemma~\ref{prop21-1} that $(\overline{q},\overline{I}_\T)$ is a solution of $\mathrm{(\DD\text{-}CP)}$. Since $(\overline{q},\overline{I}_\T)$ is obviously an extension of any element of $\GG$, we obtain that $\GG$ admits an upper bound and then, $\mathscr{F}$ is inductive. 

Finally, $\mathscr{F}$ is a non empty ordered inductive set and consequently, from Zorn's lemma, admits a maximal element. The proof is complete.
\end{proof}

\begin{proposition}[Existence of a local solution]\label{prop21-3}
There exist $a, b \in \T$ satisfying $\ab$ and $q : [a,b]_\T\rightarrow\Omega$ such that $(q,[a,b]_\T) $ is a solution of $\mathrm{(\DD\text{-}CP)}$.
\end{proposition}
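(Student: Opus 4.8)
The plan is to build the solution separately to the right and to the left of $t_0$ and then to glue the two pieces at $t_0$. Throughout I would rely on the $\DD$-integral characterization of Lemma~\ref{prop21-1}, available because \eqref{eqcondinfini} holds: indeed, $f$ being $\DD$-Carath\'eodory, the map $\tau \mapsto f(q(\tau),\tau)$ is $\mu_\DD$-measurable for every continuous $\Omega$-valued $q$, and \eqref{eq666} places it in $\L^1_\T$. It therefore suffices to produce, on a small interval $[t_0,b]_\T$ with $t_0 < b$ (resp. $[a,t_0]_\T$ with $a < t_0$), a continuous $\Omega$-valued function satisfying $q(t) = q_0 + \int_{[t_0,t[_\T} f(q(\tau),\tau)\,\DD\tau$ (resp. the analogous backward equation): according to the case for $t_0$ one keeps only the forward piece ($t_0 = \min\T$), only the backward piece ($t_0 = \max\T$), or both, in which case the two pieces agree at $t_0$ and Lemma~\ref{prop21-1} shows their concatenation solves $\mathrm{(\DD\text{-}CP)}$ on $[a,b]_\T$.

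For the forward construction I would distinguish whether $t_0$ is right-scattered or right-dense. If $t_0 \in \RR$, no fixed point is needed: I set $b = \sigma(t_0)$ and $q(\sigma(t_0)) = G^+(t_0)(q_0) = q_0 + \mu(t_0)f(q_0,t_0)$, which lies in $\Omega$ by \eqref{eqcondfsta}; on the two-point set $[t_0,\sigma(t_0)]_\T$ this $q$ is trivially absolutely continuous and satisfies $q^\DD(t_0) = f(q_0,t_0)$. If $t_0$ is right-dense, I would run a Banach fixed point (Picard) argument. Applying \eqref{eqcondrdloc-Lip} with $\overline{x} = q_0$ and $\overline{t} = t_0$ fixes $R>0$, $\delta>0$, $L\geq 0$ with $\overline{B}(q_0,R)\subset\Omega$ and $t_0+\delta\in\T$, while \eqref{eqcondinfini} applied to the compact $\overline{B}(q_0,R)$ on $[t_0,t_0+\delta[_\T$ yields a bound $M$. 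On the complete metric space $\{q\in\CC([t_0,b]_\T):\|q-q_0\|_\infty\leq R\}$ I would consider $(Tq)(t)=q_0+\int_{[t_0,t[_\T}f(q(\tau),\tau)\,\DD\tau$. Since $\mu_\DD([t_0,t[_\T)=t-t_0$, the bound $M$ gives $\|Tq-q_0\|_\infty\leq M(b-t_0)$ and \eqref{eqcondrdloc-Lip} gives $\|Tq_1-Tq_2\|_\infty\leq L(b-t_0)\|q_1-q_2\|_\infty$. As $t_0$ is right-dense and $t_0+\delta\in\T$, there are points of $\T$ arbitrarily close to $t_0$ on the right, so I may choose $b\in\T$ with $t_0<b\leq t_0+\delta$ small enough that $M(b-t_0)\leq R$ and $L(b-t_0)<1$; the Banach fixed point theorem then produces the local solution, which stays in $\overline{B}(q_0,R)\subset\Omega$.

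The backward construction is symmetric. If $t_0$ is left-scattered then $\rho(t_0)\in\RR$, the required relation is $q_0=G^+(\rho(t_0))(q(\rho(t_0)))$, and \eqref{eqcondbreg} lets me invert $G^+(\rho(t_0))$ and set $a=\rho(t_0)$, $q(\rho(t_0))=(G^+(\rho(t_0)))^{-1}(q_0)\in\Omega$. If $t_0$ is left-dense, I would repeat the contraction argument on $[a,t_0]_\T$ for the backward operator $(Tq)(t)=q_0-\int_{[t,t_0[_\T}f(q(\tau),\tau)\,\DD\tau$, now invoking \eqref{eqcondldloc-Lip} (with $\overline{t}=t_0$, which yields $t_0-\delta\in\T$ and the Lipschitz bound on $[t_0-\delta,t_0[_\T$) together with \eqref{eqcondinfini}, and choosing $a\in\T$ with $0<t_0-a$ small enough for the same self-map and contraction estimates.

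The step requiring the most care is the fixed-point argument at dense points: the endpoint must be chosen inside $\T$ --- not merely in $\R$ --- small enough to make $T$ a contraction while keeping every iterate inside $\overline{B}(q_0,R)\subset\Omega$, and the admissibility of such an endpoint rests precisely on the density of $t_0$ together with the membership $t_0\pm\delta\in\T$ furnished by the Lipschitz hypotheses. The backward scattered case is a second delicate point: there the very existence of a preceding value in $\Omega$ hinges on the invertibility supplied by \eqref{eqcondbreg}, which (as the counterexamples of Section~\ref{section24} confirm) cannot be dispensed with.
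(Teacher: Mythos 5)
Your proposal is correct and follows essentially the same strategy as the paper's proof: the $\DD$-integral characterization of Lemma~\ref{prop21-1}, a Banach fixed-point (Picard) argument at dense points using \eqref{eqcondrdloc-Lip}, \eqref{eqcondldloc-Lip} and \eqref{eqcondinfini}, and the purely algebraic one-step construction at scattered points via \eqref{eqcondfsta} and \eqref{eqcondbreg}. The only difference is organizational: where the paper distinguishes four cases according to the left/right nature of $t_0$ and, in the doubly-dense case, runs a single two-sided contraction on $[a,b]_\T$, you run independent forward and backward constructions and glue them at $t_0$, which Lemma~\ref{prop21-1} indeed justifies.
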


\begin{proof}
We only prove this proposition in the third case of Theorem~\ref{thm21-1} (the two first cases are derived similarly) for which $t_0 \neq \inf \T$ and $t_0 \neq \sup \T$. We distinguish between four situations.

\paragraph{First case:} $t_0$ is a left- and a right-scattered point of $\T$.
In this case, it is sufficient to consider $a= \rho (t_0) \in ]-\infty,t_0[_\T$, $b=\sigma(t_0) \in ]t_0,+\infty[_\T$ and the function $q$ defined on $[a,b]_\T = \{a, t_0,b \}$ with values in $\Omega$ by $q(a) = G^+(a)^{-1}(q_0)$, $q(t_0) = q_0$ and $q(b) = G^+(t_0)(q_0)$.
We note that $q(a)$ is well-defined in $\Omega$ from \eqref{eqcondbreg} and $q(b) \in \Omega$ from \eqref{eqcondfsta}. 

\paragraph{Second case:} $t_0$ is a left- and a right-dense point of $\T$.
Let $R'$, $\delta'$ and $L'$ associated with $q_0$ and $t_0$ in \eqref{eqcondldloc-Lip} and let $R''$, $\delta''$ and $L''$ associated with $q_0$ and $t_0$ in \eqref{eqcondrdloc-Lip}. We define $R = \min (R',R'') > 0$ and $L = \max (L',L'') \geq 0$. Let $M$ associated with $\overline{B}(q_0,R) \in \KK$ and $[t_0-\delta',t_0+\delta''[_\T$ in \eqref{eqcondinfini}. Consider $0 <\delta_1 \leq \delta'$ and $0 <\delta_2 \leq \delta''$ such that $a=t_0-\delta_1 \in ]-\infty,t_0[_\T$, $b=t_0+\delta_2 \in ]t_0,+\infty[_\T$ and $\delta_1$ and $\delta_2$ are sufficiently small in order to have $\max(\delta_1,\delta_2) M \leq R$ and $\max(\delta_1,\delta_2) L < 1$. Then, we can construct the $\max(\delta_1,\delta_2)L$-contraction map with respect to the norm $\Vert \cdot \Vert_{\infty}$
\begin{equation*}
\fonction{F}{\CC ([a,b]_\T,\overline{B}(q_0,R))}{\CC ([a,b]_\T,\overline{B}(q_0,R))}{q}{F(q),}
\end{equation*}
with
\begin{equation*}
\fonction{F(q)}{[a,b]_\T}{\overline{B}(q_0,R)}{t}{\left\lbrace \begin{array}{lcc}
q_0 + \int_{[t_0,t[_\T} f(q (\tau),\tau) \; \DD \tau & \text{if} & t \geq t_0 \\
q_0 -\int_{[t,t_0[_\T} f(q (\tau),\tau) \; \DD \tau & \text{if} & t \leq t_0. \\
\end{array} \right. }
\end{equation*}
It follows from the Banach fixed point theorem that $F$ has a unique fixed point denoted by $q$, and then $(q,[a,b]_\T)$ is a solution of $\mathrm{(\DD\text{-}CP)}$. 

\paragraph{Third case:} $t_0$ is a left-scattered and a right-dense point of $\T$.
Let $R$, $\delta$ and $L$ associated with $q_0$ and $t_0$ in \eqref{eqcondrdloc-Lip}. Let $M$ associated with $\overline{B}(q_0,R) \in \KK$ and $[t_0,t_0+\delta[_\T$ in \eqref{eqcondinfini}. Consider $0 <\delta_1 \leq \delta$ such that $b=t_0+\delta_1 \in ]t_0,+\infty[_\T$ and $\delta_1$ is sufficiently small in order to have $\delta_1 M \leq R$ and $\delta_1 L < 1$. Then, we can construct the $\delta_1 L$-contraction map with respect to the norm $\Vert \cdot \Vert_{\infty}$
\begin{equation*}
\fonction{F}{\CC ([t_0,b]_\T,\overline{B}(q_0,R))}{\CC ([t_0,b]_\T,\overline{B}(q_0,R))}{q}{F(q)}
\end{equation*}
with
$$
{\fonction{F(q)}{[t_0,b]_\T}{\overline{B}(q_0,R)}{t}{q_0 + \di \int_{[t_0,t[_\T} f(q(\tau),\tau) \; \DD \tau.}}
$$
It follows from the Banach fixed point theorem that $F$ has a unique fixed point denoted by $q$ defined on $[t_0,b]_\T$. Finally, since $t_0$ is a left-scattered point of $\T$ and from \eqref{eqcondbreg}, we define $a= \rho (t_0) \in ]-\infty,t_0[_\T$ and $q(a) = G^+(a)^{-1}(q_0) \in \Omega$. We have thus obtained a solution $(q,[a,b]_\T)$ of $\mathrm{(\DD\text{-}CP)}$. 

\paragraph{Fourth case:} $t_0$ is a left-dense and a right-scattered point of $\T$.
Let $R$, $\delta$ and $L$ associated with $q_0$ and $t_0$ in \eqref{eqcondldloc-Lip}. Let $M$ associated with $\overline{B}(q_0,R) \in \KK$ and $[t_0-\delta,t_0[_\T$ in \eqref{eqcondinfini}. Consider $0 <\delta_1 \leq \delta$ such that $a=t_0-\delta_1 \in ]-\infty,t_0[_\T$ and $\delta_1$ is sufficiently small in order to have $\delta_1 M \leq R$ and $\delta_1 L < 1$. Then, we can construct the $\delta_1 L$-contraction map with respect to the norm $\Vert \cdot \Vert_{\infty}$
\begin{equation*}
\fonction{F}{\CC ([a,t_0]_\T,\overline{B}(q_0,R))}{\CC ([a,t_0]_\T,\overline{B}(q_0,R))}{q}{F(q)}
\end{equation*}
with
$$
{\fonction{F(q)}{[a,t_0]_\T}{\overline{B}(q_0,R)}{t}{q_0 - \di \int_{[t,t_0[_\T} f(q(\tau),\tau) \; \DD \tau.}}
$$
It follows from the Banach fixed point theorem that $F$ admits a unique fixed point denoted by $q$ defined on $[a,t_0]_\T$. Since $t_0$ is a right-scattered point of $\T$, and from \eqref{eqcondfsta}, we define $b= \sigma (t_0) \in ]t_0,+\infty[_\T$ and $q(b) = G^+(t_0)(q_0) \in \Omega$. We have thus obtained a solution $(q,[a,b]_\T)$ of $\mathrm{(\DD\text{-}CP)}$. 
\end{proof}

From Lemma~\ref{prop21-2}, we can extend the solution given in Proposition~\ref{prop21-3} and we obtain the existence of a maximal solution. The following result proves that it is unique.

\begin{proposition}[Local uniqueness of a solution]\label{prop21-4}
Let $(q_1,I^1_\T)$ and $(q_2,I^2_\T)$ be two solutions of $\mathrm{(\DD\text{-}CP)}$. Then, $q_1 = q_2$ on $I^1_\T \cap I^2_\T$.
\end{proposition}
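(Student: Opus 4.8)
The plan is to run a connectedness argument outward from $t_0$ in each admissible direction, showing that the set on which $q_1$ and $q_2$ coincide exhausts $J := I^1_\T \cap I^2_\T$. Note that $J \in \I$, that $t_0 \in J$, and that $q_1(t_0) = q_2(t_0) = q_0$. I treat the forward half $\{ t \in J : t \geq t_0 \}$; the backward half is symmetric. Set
\[
b^* = \sup \{ b \in J : b \geq t_0 \ \text{and} \ q_1 = q_2 \ \text{on} \ [t_0,b]_\T \}.
\]
This set contains $t_0$, and since $q_1, q_2$ are continuous, agreement on $[t_0, b^*[_\T$ persists at $b^*$; thus $q_1 = q_2$ on $[t_0, b^*]_\T$ whenever $b^* \in J$. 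I claim $q_1 = q_2$ on the entire forward half of $J$. If not, then $b^* \in J$, the interval $J$ contains points strictly greater than $b^*$, and $q_1 = q_2$ on $[t_0,b^*]_\T$; writing $\overline{x} := q_1(b^*) = q_2(b^*) \in \Omega$, I reach a contradiction by propagating the agreement strictly past $b^*$.

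Two cases occur at $b^*$. If $b^*$ is right-scattered, then (since $b^* \in \RR$, where the $\DD$-equation holds) $q_i(\sigma(b^*)) = q_i(b^*) + \mu(b^*) f(q_i(b^*), b^*) = G^+(b^*)(\overline{x})$ for $i \in \{1,2\}$, so $q_1(\sigma(b^*)) = q_2(\sigma(b^*))$; as $\sigma(b^*) > b^*$ this contradicts the definition of $b^*$. If $b^*$ is right-dense, I apply \eqref{eqcondrdloc-Lip} at $(\overline{x}, b^*)$ to obtain $R, \delta, L$, and choose $b^* + \delta_1 \in J$ with $\delta_1 \leq \delta$ small enough that $L\delta_1 < 1$ and (by continuity) $q_1, q_2$ take values in $\overline{B}(\overline{x}, R)$ on $[b^*, b^* + \delta_1]_\T$. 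Subtracting the integral characterizations of Lemma~\ref{prop21-1} at $t$ and at $b^*$, and using $q_1(b^*) = q_2(b^*)$, gives for every $t \in [b^*, b^* + \delta_1]_\T$
\[
\| q_1(t) - q_2(t) \| \leq L \int_{[b^*, t[_\T} \| q_1(\tau) - q_2(\tau) \| \; \DD \tau \leq L \delta_1 \sup_{[b^*, b^* + \delta_1]_\T} \| q_1 - q_2 \|.
\]
Taking the supremum of the left-hand side over $t \in [b^*, b^* + \delta_1]_\T$ and using $L\delta_1 < 1$ forces $q_1 = q_2$ on $[b^*, b^* + \delta_1]_\T$, again contradicting the definition of $b^*$. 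Hence $q_1 = q_2$ on the whole forward half of $J$.

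The backward half is handled identically with $a^* = \inf\{ a \in J : a \leq t_0 \ \text{and} \ q_1 = q_2 \ \text{on} \ [a, t_0]_\T \}$: at a left-dense point the same $L\delta_1 < 1$ estimate, now built from \eqref{eqcondldloc-Lip} and the backward branch of Lemma~\ref{prop21-1}, yields agreement on $[a^* - \delta_1, a^*]_\T$; at a left-scattered point $a^*$, the point $\rho(a^*)$ is right-scattered, so \eqref{eqcondbreg} makes $G^+(\rho(a^*))$ invertible, and from $q_i(a^*) = G^+(\rho(a^*))(q_i(\rho(a^*)))$ one solves backward to conclude $q_1(\rho(a^*)) = q_2(\rho(a^*))$. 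The hypotheses actually invoked are \eqref{eqcondrdloc-Lip} for the forward dense step and \eqref{eqcondldloc-Lip} together with the invertibility \eqref{eqcondbreg} for the backward steps; the stability condition \eqref{eqcondfsta} plays no role here (it is needed only for existence), and these are available in each case of Theorem~\ref{thm21-1} precisely in the direction(s) in which $J$ extends. The step requiring the most care is the bookkeeping at the junctions $b^*$ and $a^*$: one must keep separate the purely algebraic propagation across scattered points (where no smallness is available, and where invertibility of $G^+$ is indispensable for going backward) from the genuinely analytic contraction estimate at dense points, and must verify that the candidate extension points $\sigma(b^*)$, $b^* + \delta_1$ (resp. $\rho(a^*)$, $a^* - \delta_1$) genuinely lie in $J$, so that the supremum (resp. infimum) is properly contradicted.
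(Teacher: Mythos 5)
Your proof is correct and follows essentially the same continuation argument as the paper's: bookkeeping of a supremum/infimum around $t_0$, purely algebraic propagation across right-scattered points (with \eqref{eqcondbreg} needed only in the backward direction), and a local Lipschitz contraction estimate at dense points built from the $\DD$-integral characterization of Lemma~\ref{prop21-1}. The only cosmetic differences are that you track the supremum of the \emph{agreement} set where the paper takes the supremum of the \emph{disagreement} set, and that you replace the paper's appeal to the Banach fixed point theorem by the equivalent direct estimate $\sup \Vert q_1 - q_2 \Vert \leq L \delta_1 \sup \Vert q_1 - q_2 \Vert$ with $L\delta_1 < 1$.
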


\begin{proof}
As before, we only prove this proposition in the third case of Theorem~\ref{thm21-1}.
We denote by $I = I^1 \cap I^2$ (interval of $\R$). One can easily prove that $I_\T = I^1_\T \cap I^2_\T \in \I$. It is sufficient to prove $q_1 = q_2$ on $[a,b]_\T$ for all $a,b \in I_\T$ satisfying $\ab$. Let $a$, $b \in I_\T$ satisfying $\ab$. Set 
$$
A =  \{ t \in [a,t_0]_\T, \; q_1 (t) \neq q_2 (t) \},$$
and
$$
B = \{ t \in [t_0,b]_\T, \; q_1 (t) \neq q_2 (t) \}.
$$
Let us prove by contradiction that $A \cup B = \emptyset$. Assume that $A \neq \emptyset$ and let $\overline{t} = \sup A$. Note that $\overline{t} \in [a,t_0]_\T$ (since $\T$ is closed) and that $q_1 = q_2$ on $]\overline{t},t_0]_\T$. In order to raise a contradiction, we first derive the four following facts.
\begin{enumerate}
\item \textit{Fact 1: $\overline{t} < t_0$.} If $t_0$ is a left-scattered point of $\T$, this claim is obvious since $q_1(t_0) = q_2(t_0) = q_0$ and $q_1 (\rho(t_0)) = q_2 (\rho(t_0)) = G^+(\rho(t_0))^{-1}(q_0)$ from \eqref{eqcondbreg}. If $t_0$ is a left-dense point of $\T$, let $R$, $\delta$ and $L$ associated with $q_0$ and $t_0$ in \eqref{eqcondldloc-Lip}. Let $M$ associated with $\overline{B}(q_0,R) \in \KK$ and $[t_0-\delta,t_0[_\T$ in \eqref{eqcondinfini}. Consider $0 <\delta_1 \leq \delta$ such that $c=t_0-\delta_1 \in [a,t_0[_\T$ and $\delta_1$ is sufficiently small in order to have $\delta_1 M \leq R$, $\delta_1 L < 1$ and $q_1$, $q_2 \in \CC ([c,t_0]_\T,\overline{B}(q_0,R))$. Since $q_1$ and $q_2$ are solutions of $\mathrm{(\DD\text{-}CP)}$ on $[a,b]_\T$, they are in particular fixed points of the $\delta_1 L$-contraction map
\begin{equation*}
\fonction{F}{\CC ([c,t_0]_\T,\overline{B}(q_0,R))}{\CC ([c,t_0]_\T,\overline{B}(q_0,R))}{q}
{F(q)}
\end{equation*}
with
$$
{\fonction{F(q)}{[c,t_0]_\T}{\overline{B}(q_0,R)}{t}{q_0 - \di \int_{[t,t_0[_\T} f(q(\tau),\tau) \; \DD \tau.}}
$$
Since $F$ has a unique fixed point from the Banach fixed point theorem, we conclude that $q_1 = q_2$ on $[c,t_0]_\T$. Hence $\overline{t} < t_0$.

\item \textit{Fact 2: $q_1(\overline{t})=q_2(\overline{t})$.} If $\overline{t}$ is a right-scattered point of $\T$, then $\sigma(\overline{t})$ is a left-scattered point of $\T$ and $q_1(\sigma(\overline{t})) = q_2(\sigma(\overline{t}))$. As a consequence, $q_1(\overline{t}) = q_2(\overline{t}) = G^+(\overline{t})^{-1} (q_1(\sigma(\overline{t})))$. If $\overline{t}$ is a right-dense point of $\T$, then $q_1(\overline{t}) = q_2(\overline{t})$ from the continuity of $q_1$ and $q_2$ and since $q_1=q_2$ on $]\overline{t},t_0]_\T$.

\item \textit{Fact 3: $\overline{t} > a$.} Indeed, if $\overline{t} = a$ then $A = \emptyset$ since $q_1(\overline{t})=q_2(\overline{t})$;

\item \textit{Fact 4: $\overline{t}$ is a left-dense point of $\T$.} Indeed, if $\overline{t}$ were to be a left-scattered point of $\T$, since $q_1(\overline{t})=q_2(\overline{t})$, then $q_1(\rho(\overline{t}))=q_2(\rho(\overline{t})) = G^+(\rho(\overline{t}))^{-1}(q_1(\overline{t}))$ and then it would raise a contradiction with the definition of $\overline{t}$.
\end{enumerate}
Let us denote by $ \overline{x} = q_1(\overline{t})=q_2(\overline{t})$. Let $R$, $\delta$ and $L$ associated with $\overline{t}$ and $\overline{x}$ in \eqref{eqcondldloc-Lip}. Let $M$ associated with $B(\overline{x},R) \in \KK$ and $[\overline{t}-\delta,\overline{t}[_\T$ in \eqref{eqcondinfini}. Consider $0 <\delta_1 \leq \delta$ such that $c_0=\overline{t}-\delta_1 \in [a,\overline{t}[_\T$ and $\delta_1$ is sufficiently small in order to have $\delta_1 M \leq R$, $\delta_1 L < 1$ and $q_1$, $q_2 \in \CC ([c_0,\overline{t}]_\T,B(\overline{x},R))$. Since $q_1$ and $q_2$ are solutions of $\mathrm{(\DD\text{-}CP)}$ on $[a,b]_\T$, they are in particular fixed points of the $\delta_1 L$-contraction map
\begin{equation*}
\fonction{F_0}{\CC ([c_0,\overline{t}]_\T,B(\overline{x},R))}{\CC ([c_0,\overline{t}]_\T,B(\overline{x},R))}{q}{F_0(q)}
\end{equation*}
with
$$
{\fonction{F_0(q)}{[c_0,\overline{t}]_\T}{B(\overline{x},R)}{t}{\overline{x} - \di \int_{[t,\overline{t}[_\T} f(q(\tau),\tau) \; \DD \tau.}}
$$
Since $F_0$ has a unique fixed point from the Banach fixed point theorem, we conclude that $q_1 = q_2$ on $[c_0,\overline{t}]_\T$, and this is a contradiction. Consequently $A = \emptyset$. 

In the same way, we prove that $B = \emptyset$ and the proof is complete.
\end{proof}

Theorem~\ref{thm21-1} follows from Lemma~\ref{prop21-2}, Propositions~\ref{prop21-3} and \ref{prop21-4}.

\subsection{Proof of Theorem~\ref{thm21-2}}\label{app12}

\begin{proposition}\label{prop21-5}
Under the assumptions of Theorem~\ref{thm21-1}, let $(q,I_\T)$ be the maximal solution of $\mathrm{(\DD\text{-}CP)}$. Then either $I_\T = \T$, that is, the solution $(q,I_\T)$ is global, or
\begin{enumerate}
\item if $t_0 = \min \T$ then $I_\T = [t_0,b[_\T$ where $b \in ]t_0,+\infty[_\T$ is a left-dense point of $\T$;
\item if $t_0 = \max \T$ then $I_\T = ]a,t_0]_\T$ where $a \in ]-\infty,t_0[_\T$ is a right-dense point of $\T$;
\item if $t_0 \neq \inf \T$ and $t_0 \neq \sup \T$ then $I_\T = ]a,+\infty[_\T$ or $I_\T = ]-\infty,b[_\T$ or $I_\T = ]a,b[_\T$, where $a \in ]-\infty,t_0[_\T$ is a right-dense point of $\T$ and $b \in ]t_0,+\infty[_\T$ is a left-dense point of $\T$.
\end{enumerate}
\end{proposition}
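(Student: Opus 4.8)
The plan is to study the two terminal points of $I_\T$ independently and to show that maximality forces each one into exactly one of the admissible shapes: either it is an extremity of $\T$ (a configuration that the $\pm\infty$ convention silently absorbs), or it is a finite endpoint that is \emph{open} and, at the same time, topologically \emph{dense} from the relevant side. Every other possibility will be ruled out by extending the solution, contradicting maximality. By symmetry I would treat only the right-hand (forward) endpoint, the left-hand case being obtained verbatim after exchanging \eqref{eqcondrdloc-Lip} and \eqref{eqcondfsta} with \eqref{eqcondldloc-Lip} and \eqref{eqcondbreg}, and replacing $\sigma, G^+$ by $\rho$ and the inverse branch $G^+(\cdot)^{-1}$ (well defined thanks to \eqref{eqcondbreg}). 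Write $b = \sup I_\T \in \R \cup \{+\infty\}$, noting that $I_\T \in \I$ means $I_\T = J_\T$ for an interval $J$ of $\R$.

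First I would exclude a \emph{closed} finite interior endpoint. Suppose $b$ is finite, $b \in I_\T$, and $b \neq \max \T$, so that $b \in \T \backslash \{ \sup \T \}$ and $q(b) \in \Omega$. If $b$ is right-scattered, then $b \in \RR$ and $\sigma(b) \in \T$; by \eqref{eqcondfsta} the value $G^+(b)(q(b)) = q(b) + \mu(b) f(q(b),b)$ lies in $\Omega$, and setting $q(\sigma(b))$ equal to it extends $q$ to $I_\T \cup \{ \sigma(b) \}$. The $\DD$-integral characterization of Lemma~\ref{prop21-1} confirms that this is a genuine solution (the difference quotient at $b$ gives $q^\DD(b) = f(q(b),b)$), contradicting maximality. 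If $b$ is right-dense, I would rerun the forward contraction-mapping construction of Proposition~\ref{prop21-3} based at $(b,q(b))$, using \eqref{eqcondrdloc-Lip} and \eqref{eqcondinfini}, to obtain a solution on $[b,b+\delta_1]_\T$ for some $\delta_1 > 0$ with $b + \delta_1 \in \T$; gluing it to $q$ and reinvoking Lemma~\ref{prop21-1} yields an extension, again a contradiction. Hence a finite $b \in I_\T$ forces $b = \max \T = \sup \T$.

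Next I would treat an \emph{open} finite endpoint. Suppose $b$ is finite with $b \notin I_\T$, so that $J$ is open at $b$ on the right and every point of $I_\T$ is strictly less than $b$. If $b$ were left-scattered, then $\rho(b) < b$ and $]\rho(b),b[_\T = \emptyset$, whence every element of $I_\T$ is $\leq \rho(b)$ and $\sup I_\T \leq \rho(b) < b$, contradicting $\sup I_\T = b$. Therefore $b$ is necessarily a left-dense point of $\T$, and since $\sup I_\T = b \notin I_\T$ the interval is truly open there, of the form $\ldots,b[_\T$.

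Finally I would assemble the right-endpoint trichotomy (either $b = +\infty$, forcing $\sup \T = +\infty$; or $b$ finite with $b = \max \T \in I_\T$; or $b$ finite, left-dense, and $b \notin I_\T$) together with the symmetric left-endpoint trichotomy, specializing to $t_0 = \min \T$, $t_0 = \max \T$, and $t_0$ interior. The listed forms then follow once one observes that the $\pm\infty$ symbols absorb precisely the cases where the solution reaches an extremity of $\T$: since $]-\infty,b[_\T$ already contains $\min \T$ whenever it exists and $]a,+\infty[_\T$ contains $\max \T$, the situations ``the solution reaches the bottom/top of $\T$'' and ``$\inf \T = -\infty$ / $\sup \T = +\infty$'' produce the same interval symbol, and $I_\T = \T$ occurs exactly when both extremities are reached. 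The main obstacle is the extension step underlying the second paragraph: one must check carefully that the glued map remains absolutely continuous and satisfies the $\DD$-integral equation across the junction point, which is exactly where Lemma~\ref{prop21-1} and the local construction of Proposition~\ref{prop21-3} are indispensable; the remaining work is the bookkeeping of the three positions of $t_0$ against this $\pm\infty$ convention.
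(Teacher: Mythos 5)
Your proposal is correct and takes essentially the same route as the paper: you rule out every extendable endpoint configuration by contradicting maximality, extending at a closed endpoint $b<\sup\T$ via the local existence machinery (the map $G^+$ at right-scattered points, the contraction argument at right-dense points) and gluing through the $\DD$-integral characterization of Lemma~\ref{prop21-1}, and you dispose of a half-open endpoint at a left-scattered point by noting it is really a closed endpoint at $\rho(b)$ --- exactly the paper's two steps. The only cosmetic difference is that the paper cites Proposition~\ref{prop21-3} as a black box for the forward extension, whereas you unfold its scattered/dense case split explicitly.
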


\begin{proof}
We only prove this proposition in the first case of Theorem~\ref{thm21-1} (the other ones are derived similarly).

Let us first prove that if $I_\T = [t_0,b]_\T$ then $b=\max \T$ (and thus $I_\T = \T$). By contradiction, assume that $I_\T = [t_0,b]_\T$ with $b < \sup \T$. Consider the $\DD$-Cauchy problem 
$$z^\DD(t)  = f (z(t),t), \quad z(b)=q(b).$$
As in Proposition~\ref{prop21-3}, we can prove that it has a solution $(z,[b,b_1]_\T)$ with $b_1 \in ]b,+\infty[_\T$. Then, we define $q_1$ by
\begin{equation}
q_1 (t) = \left\lbrace \begin{array}{rcl}
q(t) & \text{if} & t \in [t_0,b]_\T, \\
z(t) & \text{if} & t \in [b,b_1]_\T,
\end{array} \right.
\end{equation}
for every $ t \in [t_0,b_1]_\T$.
Then $q_1 \in \CC([t_0,b_1]_\T)$ and one can easily prove that
\begin{equation*}
q_1 (t) = q_0 + \di \int_{[t_0,t[_\T} f(q_1(\tau),\tau) \; \DD \tau.
\end{equation*}
for every $ t \in [t_0,b_1]_\T$.
It follows from Lemma~\ref{prop21-1} that $(q_1,[t_0,b_1]_\T)$ is a solution of $\mathrm{(\DD\text{-}CP)}$ and is a strict extension of $(q,[t_0,b]_\T)$. It is a contradiction with the maximality of $(q,[t_0,b]_\T)$. 

If $I_\T = [t_0,b[_\T$ with $b$ a left-scattered point of $\T$, then $I_\T = [a,\rho(b)]_\T$ with $\rho(b) < \sup \T$ and we recover to the previous contradiction.
\end{proof}

\begin{lemma}\label{prop21-6}
Under the assumptions of Theorem~\ref{thm21-1}, let $(q,I_\T)$ be the maximal solution of $\mathrm{(\DD\text{-}CP)}$. If $(q,I_\T)$ is not global, then $q$ cannot be continuously extended with a value in $\Omega$ at $t=a$ or at $t=b$ (see Proposition~\ref{prop21-5} for $a$ and $b$).
\end{lemma}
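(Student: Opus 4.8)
The plan is to argue by contradiction using the maximality of $(q,I_\T)$. I treat the first case $t_0 = \min\T$, where by Proposition~\ref{prop21-5} the non-global maximal solution has $I_\T = [t_0,b[_\T$ with $b$ a left-dense point of $\T$; the statement concerning $a$, and the remaining cases of Theorem~\ref{thm21-1}, follow by an entirely symmetric argument. Suppose, for contradiction, that $q$ can be continuously extended at $t=b$ with a value $\ell \in \Omega$, that is, $\lim_{t\to b,\, t\in[t_0,b[_\T} q(t) = \ell \in \Omega$ (this limit is genuine precisely because $b$ is left-dense). I then define $\overline{q}:[t_0,b]_\T \to \Omega$ by $\overline{q} = q$ on $[t_0,b[_\T$ and $\overline{q}(b) = \ell$, so that $\overline{q}$ is continuous on the compact set $[t_0,b]_\T$ and takes its values in $\Omega$. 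The goal is to show that $(\overline{q},[t_0,b]_\T)$ is a solution of $\mathrm{(\DD\text{-}CP)}$, which strictly extends $(q,[t_0,b[_\T)$ and contradicts maximality.

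The key step is to check that $f(\overline{q}(\cdot),\cdot) \in \L^1_\T([t_0,b[_\T,\R^n)$, so that the $\DD$-integral characterization of Lemma~\ref{prop21-1} can be pushed all the way up to $b$. Since $\overline{q}$ is continuous on the compact $[t_0,b]_\T$, its image $K := \overline{q}([t_0,b]_\T)$ is a compact subset of $\Omega$, i.e. $K \in \KK$. Applying \eqref{eqcondinfini} to this $K$ and to the pair $(t_0,b)$ yields $M \geq 0$ with $\Vert f(x,t)\Vert \leq M$ for every $x\in K$ and $\DD$-a.e. $t \in [t_0,b[_\T$; in particular $\Vert f(\overline{q}(t),t)\Vert \leq M$ for $\DD$-a.e. $t$. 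As $[t_0,b[_\T$ is bounded, this gives $f(\overline{q}(\cdot),\cdot) \in \L^\infty_\T([t_0,b[_\T,\R^n) \subset \L^1_\T([t_0,b[_\T,\R^n)$. This is the crucial point, and the main obstacle: it is exactly here that local boundedness is needed, for without \eqref{eqcondinfini} the integral defining the extension need not converge up to the terminal point and the whole argument collapses. Everything else is routine.

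It remains to conclude. I consider $Q(t) = q_0 + \int_{[t_0,t[_\T} f(\overline{q}(\tau),\tau)\,\DD\tau$ for $t \in [t_0,b]_\T$, which by Proposition~\ref{prop13-2} lies in $\AC([t_0,b]_\T)$ and is in particular continuous. For every $t \in [t_0,b[_\T$ one has $Q(t) = q(t) = \overline{q}(t)$, since $(q,[t_0,b[_\T)$ is a solution and hence satisfies the characterization of Lemma~\ref{prop21-1} (the integral only involves values of $q=\overline{q}$ on $[t_0,t[_\T \subset [t_0,b[_\T$). Thus $Q$ and $\overline{q}$ are two continuous functions on $[t_0,b]_\T$ agreeing on $[t_0,b[_\T$; as $b$ is left-dense, $[t_0,b[_\T$ accumulates at $b$, so continuity forces $Q(b) = \overline{q}(b)$. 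Hence $\overline{q}$ satisfies the integral identity on all of $[t_0,b]_\T$, and by Lemma~\ref{prop21-1} the couple $(\overline{q},[t_0,b]_\T)$ is a solution of $\mathrm{(\DD\text{-}CP)}$ strictly extending $(q,[t_0,b[_\T)$. This contradicts the maximality of $(q,I_\T)$, so no continuous extension of $q$ into $\Omega$ at $t=b$ exists, which is the desired claim.
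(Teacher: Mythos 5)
Your proof is correct and follows essentially the same route as the paper's: assume a continuous extension with value $q_b \in \Omega$ exists, use \eqref{eqcondinfini} on the compact image of the extended function to get $f(\overline{q}(\cdot),\cdot) \in \L^1_\T([t_0,b[_\T,\R^n)$, push the integral identity of Lemma~\ref{prop21-1} up to $t=b$, and contradict maximality. The only difference is in the final limiting step, and it is cosmetic: where you invoke Proposition~\ref{prop13-2} (absolute continuity of the indefinite $\DD$-integral) together with uniqueness of limits at the left-dense point $b$, the paper passes to the limit in the integral directly via Lebesgue's dominated convergence theorem; the two steps are interchangeable and rest on the same integrability fact.
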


\begin{proof}
We only prove this lemma in the first case of Theorem~\ref{thm21-1}.
By contradiction, let us assume that $q$ can be continuously extended with a value in $\Omega$ at $t=b$, that is, $\lim_{t \to b, \; t \in [t_0,b[_\T } q(t) = q_b \in \Omega$. Then, we define $q_1$ by
\begin{equation*}
q_1 (t) = \left\lbrace \begin{array}{rcl}
q(t) & \text{if} & t \in [t_0,b[_\T \\
q_b & \text{if}& t=b,
\end{array} \right.
\end{equation*}
for every $ t \in [t_0,b]_\T$.
In particular $q_1:[t_0,b]_\T\rightarrow\Omega$ and $q_1 \in \CC ([t_0,b]_\T)$. Our aim is to prove that $(q_1,[t_0,b]_\T)$ is a solution of $\mathrm{(\DD\text{-}CP)}$. 

Since $(q,[t_0,b[_\T)$ is a solution of $\mathrm{(\DD\text{-}CP)}$, it follows from Lemma~\ref{prop21-1} that
\begin{equation}\label{eq671}
q_1(t) = q(t) = q_0 + \di \int_{[t_0,t[_\T} f(q(\tau),\tau) \; \DD \tau = q_0 + \di \int_{[t_0,t[_\T} f(q_1(\tau),\tau) \; \DD \tau ,
\end{equation}
for every $b' \in ]t_0,b[_\T$ and every $t \in [t_0,b']_\T$.
Since $f(q_1,t) \in \L^1_\T ([t_0,b[_\T,\R^n)$ (see \eqref{eq666}), we infer from Lebesgue's dominated convergence theorem that
\begin{equation*}
q_1 (b) = q_b = q_0 + \di \int_{[t_0,b[_\T} f(q_1(\tau),\tau) \; \DD \tau.
\end{equation*}
Therefore \eqref{eq671} also holds for $b'=b$. It follows from Lemma~\ref{prop21-1} that $(q_1,[t_0,b]_\T)$ is a solution of $\mathrm{(\DD\text{-}CP)}$ and is a strict extension of $(q,[t_0,b[_\T)$. It is a contradiction with the maximality of $(q,[t_0,b[_\T)$.
\end{proof}

\begin{lemma}\label{prop21-7}
Under the assumptions of Theorem~\ref{thm21-1}, let $(q,I_\T)$ be the maximal solution of $\mathrm{(\DD\text{-}CP)}$. If $(q,I_\T)$ is not global, then for every $K \in \KK$ there exists $t\in I_\T$ (close to $a$ or $b$ depending on the cases listed in the theorem) such that $q(t)\in\Omega\setminus K$.
\end{lemma}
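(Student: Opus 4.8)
The plan is to argue by contradiction and reduce the statement to Lemma~\ref{prop21-6}. Following the convention of the preceding proofs, I would treat only the first case $t_0 = \min\T$, for which Proposition~\ref{prop21-5} guarantees that $I_\T = [t_0,b[_\T$ with $b \in\, ]t_0,+\infty[_\T$ a left-dense point of $\T$; the remaining cases are entirely analogous. Suppose, contrary to the claim, that there exists a compact set $K \in \KK$ and a point $c \in [t_0,b[_\T$ such that $q(t) \in K$ for every $t \in [c,b[_\T$.

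The key estimate comes from combining the integral characterization of solutions with the local boundedness assumption. Since $b$ is a genuine real number, the interval $[c,b[_\T$ is bounded, so \eqref{eqcondinfini} applied to $K$ and $[c,b[_\T$ yields $M \geq 0$ with $\Vert f(x,t)\Vert \leq M$ for every $x \in K$ and $\DD$-a.e. $t \in [c,b[_\T$; in particular $\Vert f(q(\tau),\tau)\Vert \leq M$ for $\DD$-a.e. $\tau \in [c,b[_\T$, because $q(\tau) \in K$ there. Using Lemma~\ref{prop21-1} and the additivity of the $\DD$-integral, for all $c \leq t_1 \leq t_2 < b$ I would write $q(t_2) - q(t_1) = \int_{[t_1,t_2[_\T} f(q(\tau),\tau)\,\DD\tau$, whence
\[
\Vert q(t_2) - q(t_1) \Vert \;\leq\; \int_{[t_1,t_2[_\T} \Vert f(q(\tau),\tau)\Vert \; \DD\tau \;\leq\; M\,\mu_\DD([t_1,t_2[_\T) \;=\; M\,(t_2 - t_1).
\]

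Since $b$ is a left-dense point of $\T$, there are points of $\T$ accumulating at $b$ from the left, and the Lipschitz-type bound above shows that $t \mapsto q(t)$ satisfies a Cauchy criterion as $t \to b$ within $[t_0,b[_\T$. As $\R^n$ is complete, the limit $q_b := \lim_{t \to b,\ t \in [t_0,b[_\T} q(t)$ exists in $\R^n$. Because $K$ is compact, hence closed, and $q(t) \in K$ for $t$ near $b$, the limit satisfies $q_b \in K \subset \Omega$. Thus $q$ can be continuously extended with a value in $\Omega$ at $t = b$, which contradicts Lemma~\ref{prop21-6}. This contradiction establishes the claim.

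The step requiring the most care is the existence of the limit $q_b$ in $\Omega$: it relies precisely on $b$ being left-dense (so that the limit is taken along a sequence of time-scale points genuinely accumulating at $b$), on the measure identity $\mu_\DD([t_1,t_2[_\T) = t_2 - t_1$ that converts the integral bound into a Cauchy estimate, and on the closedness of $K$ to guarantee $q_b \in \Omega$. Everything else is a direct application of the integral characterization of solutions and of the non-extendability recorded in Lemma~\ref{prop21-6}.
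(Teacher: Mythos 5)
Your proof is correct and takes essentially the same route as the paper's: both argue by contradiction, use \eqref{eqcondinfini} to get the estimate $\Vert q(t_2)-q(t_1)\Vert \leq M(t_2-t_1)$, deduce from uniform continuity (the Cauchy criterion) that $q$ extends continuously at the left-dense point $b$ with a value $q_b$ lying in the closed set $K \subset \Omega$, and then contradict Lemma~\ref{prop21-6}. The only, immaterial, difference is that you negate the statement as $q([c,b[_\T) \subset K$ for some $c$, whereas the paper assumes $q(I_\T)\subset K$; these are equivalent since $q([t_0,c]_\T)$ is compact.
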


\begin{proof}
We only prove this lemma in the first case of Theorem~\ref{thm21-1}.
By contradiction, assume that there exists $K \in \KK $ such that $q$ takes its values in $K$ on $I_\T = [t_0,b[_\T$ with $b$ a left dense point of $\T$. Consider $M \geq 0$ associated with $K \in \KK$ and $[t_0,b[_\T$ in \eqref{eqcondinfini}. For all $t_1 \leq t_2$ elements of $[t_0,b[_\T$, one has
\begin{equation*}
\Vert q(t_2) - q(t_1) \Vert \leq \di \int_{[t_1,t_2[_\T} \Vert f(q(\tau),\tau) \Vert \; \DD \tau \leq M (t_2 - t_1).
\end{equation*}
Therefore $q$ is Lipschitz continuous and thus uniformly continuous on $[t_0,b[_\T$ with $b$ a left-dense point of $\T$. Hence $q$ can be continuously extended at $t=b$ with a value $q_b \in \R^n$. Moreover, since $q$ takes its values in the compact $K \subset \Omega$, it follows that $q_b \in \Omega$. Using Lemma~\ref{prop21-6}, this raises a contradiction. 
\end{proof}

The proof of Theorem~\ref{thm21-2} follows from Proposition~\ref{prop21-5} and Lemma~\ref{prop21-7}.

\subsection{Proof of Theorem~\ref{thm22-1}}\label{app13}
Note that since $\Omega = \R^n$ and since $f$ satisfies \eqref{eqcondgloblip}, $f$ automatically satisfies \eqref{eqcondfsta} and \eqref{eqcondrdloc-Lip}. Since $t_0 = \min \T$, $\mathrm{(\DD\text{-}CP)}$ admits a unique maximal solution $(q,I_\T)$ from Theorem~\ref{thm21-1}. Proving that $I_\T = \T$ requires the following result.

\begin{lemma}\label{lem22-1}
If $t_0 = \min \T$ then
\begin{equation*}
\int_{[t_0,t[_\T} (\tau - t_0)^k \; \DD \tau \leq \dfrac{(t-t_0)^{k+1}}{k+1},
\end{equation*}
for every $k \in \N$ and every $t \in \T$.
\end{lemma}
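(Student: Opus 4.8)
The plan is to prove the integral inequality
$$\int_{[t_0,t[_\T} (\tau - t_0)^k \; \DD \tau \leq \dfrac{(t-t_0)^{k+1}}{k+1}$$
by induction on $k \in \N$, treating $t$ as a fixed but arbitrary element of $\T$ with $t \geq t_0 = \min \T$. The base case $k=0$ reduces to $\mu_\DD([t_0,t[_\T) = t - t_0$, which is exactly the elementary property of the $\DD$-measure recalled in Section~\ref{section12}. The inductive mechanism I would use is the time-scale integration-by-parts (or rather a direct comparison) relating the $\DD$-integral of $(\tau-t_0)^k$ to that of $(\tau-t_0)^{k+1}$, exploiting that the continuous polynomial $p(\tau) = (\tau-t_0)^{k+1}$ has ordinary derivative $(k+1)(\tau-t_0)^k$.

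The cleanest route I would take avoids heavy time-scale Leibniz formulas and instead compares the $\DD$-integral directly with the ordinary Lebesgue integral. Using the decomposition recalled in Section~\ref{section12},
$$\int_{[t_0,t[_\T} (\tau - t_0)^k \; \DD \tau = \int_{[t_0,t[} (\tau - t_0)^k \, d\tau + \sum_{r \in [t_0,t[ \cap \RR} \mu(r) (r - t_0)^k,$$
where the first term is the usual Lebesgue integral over the real interval, equal to $(t-t_0)^{k+1}/(k+1)$ minus the contribution of the gaps. The key estimate is that for each right-scattered point $r$, the rectangle contribution $\mu(r)(r-t_0)^k$ is bounded by the corresponding slice of the continuous integral over the gap $[r,\sigma(r)[$, because $(r-t_0)^k \leq (\tau-t_0)^k$ for $\tau \in [r,\sigma(r)[$ (the integrand is monotone increasing on $[t_0,t[$). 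Summing the Lebesgue integral over $[t_0,t[$ together with all the gap contributions gives at most $\int_{[t_0,t]} (\tau-t_0)^k\,d\tau = (t-t_0)^{k+1}/(k+1)$, which is the desired bound without even needing induction.

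I expect the main obstacle to be justifying the pointwise comparison on right-scattered points rigorously, namely that $\mu(r)(r-t_0)^k \leq \int_{[r,\sigma(r)[}(\tau-t_0)^k\,d\tau$, and then assembling these local estimates into the global inequality without double-counting the measure. Since the integrand $\tau \mapsto (\tau-t_0)^k$ is nonnegative and nondecreasing on $[t_0,+\infty[$ (here one uses $\tau \geq t_0 = \min \T$), on each gap interval $[r,\sigma(r)[$ of length $\mu(r)$ the value $(r-t_0)^k$ is the minimum, so the rectangle under-approximates the area under the curve; this is precisely the inequality needed. The subtle bookkeeping point is that the real interval $[t_0,t[$ together with the disjoint union of gaps $]r,\sigma(r)[$ over $r \in [t_0,t[\cap\RR$ reconstitutes (up to a null set) the interval $[t_0,\tilde t[$ where $\tilde t$ accounts for the final point, so one must check that the combined continuous integral does not exceed $(t-t_0)^{k+1}/(k+1)$. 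Once this comparison is set up carefully, the conclusion follows by the explicit value of the ordinary integral $\int_{t_0}^{t}(\tau-t_0)^k\,d\tau$.
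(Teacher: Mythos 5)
Your proposal is correct and, after discarding the initial induction framing, it is essentially the paper's own proof: the same decomposition of the $\DD$-integral into the Lebesgue integral over $[t_0,t[_\T$ plus the sum $\sum_{r} \mu(r)(r-t_0)^k$ over right-scattered points, the same monotonicity estimate $\mu(r)(r-t_0)^k \leq \int_{]r,\sigma(r)[}(\tau-t_0)^k\,d\tau$ on each gap, and the same reassembly into $\int_{[t_0,t[}(\tau-t_0)^k\,d\tau = (t-t_0)^{k+1}/(k+1)$ using disjointness of the gaps from the time-scale set. The only cosmetic difference is your worry about a modified endpoint $\tilde t$: none is needed, since every gap $]r,\sigma(r)[$ with $r \in [t_0,t[_\T \cap \RR$ satisfies $\sigma(r) \leq t$, so the union is contained in (in fact equals) $[t_0,t[$.
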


\begin{proof}
One has
\begin{equation*}
 \int_{[t_0,t[_\T} (\tau - t_0)^k \; \DD \tau =  \int_{[t_0,t[_\T} (\tau - t_0)^k \; d\tau + \di \sum_{r \in [t_0,t[_\T \cap \RR} \mu (r) (r -t_0)^k ,
\end{equation*}
for every $k \in \N$ and every $t \in \T$.
Since
\begin{equation*}
\begin{split}
\sum_{r \in [t_0,t[_\T \cap \RR} \mu (r) (r -t_0)^k &= \di \sum_{r \in [t_0,t[_\T \cap \RR} \di \int_{]r,\sigma(r)[} (r -t_0)^k \; d\tau \\
& \leq \di \sum_{r \in [t_0,t[_\T \cap \RR} \di \int_{]r,\sigma(r)[} (\tau -t_0)^k \; d\tau,
\end{split}
\end{equation*}
it follows that
\begin{equation*}
\di \int_{[t_0,t[_\T} (\tau - t_0)^k \; \DD \tau \leq \di \int_{[t_0,t[} (\tau - t_0)^k \; d\tau = \dfrac{(t-t_0)^{k+1}}{k+1},
\end{equation*}
and the proof is complete.
\end{proof}

We define the mapping
\begin{equation*}
\fonction{F}{\CC (\T,\R^n)}{\CC (\T,\R^n)}{q}{F(q)}
\end{equation*}
with
$$
{\fonction{F(q)}{\T}{\R^n}{t}{q_0 + \di \int_{[t_0,t[_\T} f ( q(\tau),\tau) \; \DD \tau.}}
$$
From Lemma~\ref{lem22-1}, one can easily prove by induction that
\begin{equation*}
\Vert F^k (q_1)(t) - F^k (q_2)(t) \Vert \leq \dfrac{L^k}{k!} \Vert q_1 - q_2 \Vert_\infty (t-t_0)^k,
\end{equation*}
for every $k\in \N^*$, all $q_1, q_2 \in \CC(\T,\R^n)$, and every $t \in \T$.
Then,
\begin{equation*}
\Vert F^k (q_1) - F^k (q_2) \Vert_\infty \leq \dfrac{(L(b-a))^k}{k!} \Vert q_1 - q_2 \Vert_\infty ,
\end{equation*}
for every $k\in \N^*$, all $q_1, q_2 \in \CC(\T,\R^n)$.
Therefore $F$ admits a contraction iterate and thus has a unique fixed point that is a global solution of $\mathrm{(\DD\text{-}CP)}$. This concludes the proof of Theorem~\ref{thm22-1}.

\subsection{Further comments for the shifted case}\label{app2}
An important remark in the \textit{shifted} case is the following. Let $(a,b) \in \T^2$ satisfying $\ab$ and let $q:[a,b]_\T\rightarrow\Omega$. Since $\sigma(t) \in [a,b]_\T$ for every $t \in [a,b[_\T$, $q^\sigma$ is well defined on $[a,b[_\T$. This remark permits to derive all results of Section~\ref{section2} in a similar way since $\DD$-integrals are considered on intervals of the form $[a,b[_\T$.

For example, if $f$ satisfies \eqref{eqcondinfini}, then for all $(a,b)\in\T^2$ such that $a<b$,
\begin{equation*}
f(q^\sigma,t) \in \L^\infty_\T ([a,b[_\T,\R^n) \subset \L^1_\T ([a,b[_\T,\R^n),
\end{equation*}
for every  $q \in \CC ([a,b]_\T,\R^n)$.
This remark permits to prove (from Section~\ref{section13}) the following $\DD$-integral characterization of the solutions of $\mathrm{(\DD\text{-}CP^\sigma)}$.

\begin{lemma}\label{prop31-1}
Let $I_\T \in \I$ and $q:I_\T\rightarrow\Omega$. If $f$ satisfies \eqref{eqcondinfini}, then the couple $(q,I_\T)$ is a solution of $\mathrm{(\DD\text{-}CP^\sigma)}$ if and only if for all $a,b \in I_\T$ satisfying $\ab$, one has $q \in \CC([a,b]_\T,\R^n)$ and
\begin{equation*}
q(t) = \left\lbrace \begin{array}{lcc}
q_0 + \int_{[t_0,t[_\T} f(q^\sigma (\tau),\tau) \; \DD \tau & \text{if} & t \geq t_0, \\
q_0 - \int_{[t,t_0[_\T} f(q^\sigma (\tau),\tau) \; \DD \tau & \text{if} & t \leq t_0. \\
\end{array} \right.
\end{equation*}
for every $t \in [a,b]_\T$.
\end{lemma}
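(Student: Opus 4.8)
The plan is to reduce the equivalence to compact subintervals and then mirror, essentially line by line, the argument behind the non shifted characterization of Lemma~\ref{prop21-1}, the only new ingredient being the remark opening Section~\ref{app2}: whenever $q:[a,b]_\T\rightarrow\Omega$ with $\ab$, the shifted map $q^\sigma$ is well defined on $[a,b[_\T$ because $\sigma(\tau)\in[a,b]_\T$ for every $\tau\in[a,b[_\T$. Since both the notion of a solution on $I_\T$ and the asserted integral identity are quantified over all $a,b\in I_\T$ with $\ab$, it suffices to fix such a pair and prove the equivalence on $[a,b]_\T$. For any $q\in\CC([a,b]_\T,\R^n)$ taking its values in $\Omega$, the image is a compact $K\in\KK$, so $q^\sigma$ takes its values in $K$ on $[a,b[_\T$; invoking \eqref{eqcondinfini} exactly as recorded just before the statement, the map $\tau\mapsto f(q^\sigma(\tau),\tau)$ then belongs to $\L^\infty_\T([a,b[_\T,\R^n)\subset\L^1_\T([a,b[_\T,\R^n)$, so that every $\DD$-integral appearing below is meaningful.

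For the direct implication I would suppose $(q,[a,b]_\T)$ is a solution of $\mathrm{(\DD\text{-}CP^\sigma)}$. By definition $q\in\AC([a,b]_\T)$, hence $q\in\CC([a,b]_\T,\R^n)$, with $q(t_0)=q_0$ and $q^\DD(t)=f(q^\sigma(t),t)$ for $\DD$-a.e. $t\in[a,b[_\T$. Proposition~\ref{prop13-1} then yields $q(t)=q(t_0)+\int_{[t_0,t[_\T}q^\DD(\tau)\,\DD\tau$ for $t\geq t_0$ and the sign-reversed formula for $t\leq t_0$; substituting $q(t_0)=q_0$ and $q^\DD=f(q^\sigma,\cdot)$ gives the integral identity. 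Conversely I would assume $q\in\CC([a,b]_\T,\R^n)$ satisfies the integral identity. Applying Proposition~\ref{prop13-2} to the $\L^1_\T$ function $\tau\mapsto f(q^\sigma(\tau),\tau)$, the function $Q(t)=\int_{[t_0,t[_\T}f(q^\sigma(\tau),\tau)\,\DD\tau$ (with the negative definition for $t\leq t_0$) lies in $\AC([a,b]_\T)$ and satisfies $Q^\DD(t)=f(q^\sigma(t),t)$ for $\DD$-a.e. $t$. Since the identity reads $q=q_0+Q$, I would conclude $q\in\AC([a,b]_\T)$, $q(t_0)=q_0+Q(t_0)=q_0$, and $q^\DD=Q^\DD=f(q^\sigma,\cdot)$ $\DD$-a.e., that is, $(q,[a,b]_\T)$ is a solution.

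The only genuinely delicate point, and the place where the shifted case departs from the non shifted one, will be the measurability and integrability of the composite $\tau\mapsto f(q^\sigma(\tau),\tau)$: one must be sure it is a legitimate element of $\L^1_\T([a,b[_\T,\R^n)$ before Propositions~\ref{prop13-1} and~\ref{prop13-2} may be applied. This is exactly where the $\DD$-Carath\'eodory hypothesis on $f$ combines with the well-definedness of $q^\sigma$ on $[a,b[_\T$; once $q^\sigma$ is recognized as a $\mu_\DD$-measurable $\Omega$-valued map with range in a compact set, \eqref{eqcondinfini} supplies the uniform bound and hence membership in $\L^\infty_\T\subset\L^1_\T$. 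With this verified, no step uses the unshifted structure, and the characterization follows as above.
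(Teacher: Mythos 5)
Your proposal is correct and follows essentially the same route as the paper: it rests on the opening remark of Section~\ref{app2} (that $q^\sigma$ is well defined on $[a,b[_\T$ since $\sigma(\tau)\in[a,b]_\T$), the resulting membership $f(q^\sigma,\cdot)\in\L^\infty_\T([a,b[_\T,\R^n)\subset\L^1_\T([a,b[_\T,\R^n)$ under \eqref{eqcondinfini}, and then Propositions~\ref{prop13-1} and~\ref{prop13-2} applied on each $[a,b]_\T$ exactly as in the non shifted Lemma~\ref{prop21-1}. This is precisely what the paper indicates when it says the characterization follows ``from Section~\ref{section13}'' given that remark, so no discrepancy to report.
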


All results permitting to prove Theorems \ref{thm31-1} and \ref{thm31-2} can be derived as in Section \ref{app1}. Nevertheless, in order to derive Theorem \ref{thm32-1}, the following result is required.

\begin{lemma}\label{lem32-1}
If $t_0 = \max \T$ then
\begin{equation*}
\int_{[t,t_0[_\T} (t_0-\sigma(\tau))^k \; \DD \tau \leq \dfrac{(t_0-t)^{k+1}}{k+1},
\end{equation*}
for every $k \in \N$ and every $t \in \T$.
\end{lemma}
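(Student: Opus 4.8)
The plan is to mimic the proof of Lemma~\ref{lem22-1}, adapting the monotonicity step to the backward shifted setting. First I would invoke the decomposition of the $\DD$-integral recalled in Section~\ref{section12}: since $t_0 = \max \T = \sup \T$, the set $[t,t_0[_\T$ is contained in $\T \backslash \{ \sup \T \}$, so that
\begin{equation*}
\int_{[t,t_0[_\T} (t_0-\sigma(\tau))^k \; \DD \tau = \int_{[t,t_0[_\T} (t_0-\sigma(\tau))^k \; d\tau + \di \sum_{r \in [t,t_0[_\T \cap \RR} \mu(r) (t_0-\sigma(r))^k,
\end{equation*}
for every $k \in \N$ and every $t \in \T$.

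Next I would treat the two terms separately. For the Lebesgue integral, I would use the fact (recalled at the start of Section~\ref{section12}) that $\RR$ is at most countable, hence $\mu_L$-negligible, so that $\sigma(\tau) = \tau$ for $\mu_L$-a.e. $\tau$; this allows me to replace the integrand by $(t_0-\tau)^k$ without changing the value of the integral. For the sum, I would write $\mu(r) = \sigma(r) - r = \int_{]r,\sigma(r)[} d\tau$ and observe that, since $t_0 = \max \T$, every $\tau \in \,]r,\sigma(r)[$ satisfies $0 \leq t_0 - \sigma(r) < t_0 - \tau$. As $x \mapsto x^k$ is nondecreasing on $[0,+\infty[$ for $k \in \N$, this yields $\mu(r)(t_0-\sigma(r))^k \leq \int_{]r,\sigma(r)[} (t_0-\tau)^k \; d\tau$.

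Summing these gap estimates and recombining with the Lebesgue integral over $[t,t_0[_\T$, I would use the disjoint decomposition $[t,t_0[ \,=\, [t,t_0[_\T \,\cup\, \bigcup_{r \in [t,t_0[_\T \cap \RR} ]r,\sigma(r)[$ to recover the full Lebesgue integral over the real interval, and conclude
\begin{equation*}
\int_{[t,t_0[_\T} (t_0-\sigma(\tau))^k \; \DD \tau \leq \int_{[t,t_0[} (t_0-\tau)^k \; d\tau = \dfrac{(t_0-t)^{k+1}}{k+1},
\end{equation*}
which is the claimed bound.

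The step requiring the most care is the sign bookkeeping in the gap estimate. Unlike the forward case of Lemma~\ref{lem22-1}, where the monotonicity $r \leq \tau$ is applied directly to $(\tau-t_0)^k$, here the relevant comparison is between $\sigma(r)$ and $\tau$; the crucial point is that $t_0 = \max \T$ guarantees $t_0 - \sigma(r) \geq 0$ and $t_0 - \tau \geq 0$, so that exponentiation by $k$ preserves the inequality. Everything else I expect to be a routine transcription of the argument establishing Lemma~\ref{lem22-1}.
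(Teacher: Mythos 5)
Your proposal is correct and follows essentially the same route as the paper's proof: the decomposition of the $\DD$-integral into a Lebesgue integral plus a sum over right-scattered points, the gap estimate $\mu(r)(t_0-\sigma(r))^k \leq \int_{]r,\sigma(r)[}(t_0-\tau)^k\,d\tau$ using $t_0-\sigma(r) \geq 0$, and recombination into $\int_{[t,t_0[}(t_0-\tau)^k\,d\tau$. The only difference is that you spell out two steps the paper leaves implicit, namely that $\sigma(\tau)=\tau$ for $\mu_L$-a.e.\ $\tau$ (so the integrand in the Lebesgue part may be replaced by $(t_0-\tau)^k$) and the disjoint covering of $[t,t_0[$ by $[t,t_0[_\T$ and the gaps, which is a welcome precision rather than a deviation.
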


\begin{proof}
One has
$$ \int_{[t,t_0[_\T} (t_0-\sigma(\tau))^k \; \DD \tau = \di \int_{[t,t_0[_\T} (t_0-\tau)^k \; d\tau + \di \sum_{r \in [t,t_0[_\T \cap \RR} \mu (r) (t_0-\sigma(r))^k ,$$
for every $k \in \N$ and every $t \in \T$.
Since
\begin{equation*}
\begin{split}
 \sum_{r \in [t,t_0[_\T \cap \RR} \mu (r) (t_0-\sigma(r))^k &= \di \sum_{r \in [t,t_0[_\T \cap \RR} \di \int_{]r,\sigma(r)[} (t_0-\sigma(r))^k \; d\tau \\
 &\leq \di \sum_{r \in [t,t_0[_\T \cap \RR} \di \int_{]r,\sigma(r)[} (t_0-\tau)^k \; d\tau,
\end{split}
\end{equation*}
we infer that
\begin{equation*}
\int_{[t,t_0[_\T} (t_0-\sigma(\tau))^k \; \DD \tau \leq \di \int_{[t,t_0[} (t_0-\tau)^k \; d\tau = \dfrac{(t_0-t)^{k+1}}{k+1},
\end{equation*}
and the statement follows.
\end{proof}

\bibliographystyle{plain}

\end{document}